\newtheoremstyle{mydef}
{\topsep}{\topsep}%
{}{}%
{\itshape}{}
{\newline}
{%
  \rule{\textwidth}{0.0pt}\\*%
  \thmname{#1}~\thmnumber{#2}\thmnote{\-\ #3}.\\*[-1.0ex]%
  \rule{\textwidth}{0.0pt}}%
\DeclarePairedDelimiter\ceil{\lceil}{\rceil}
\newtheorem{theorem}{Theorem}[section]
\newtheorem{remark}{Remark}
\newtheorem{definition}{Definition}
\newtheorem{lemma}{Lemma}
\newtheorem{corollary}{Corollary}[theorem]
\numberwithin{equation}{section}
\renewcommand{\a}{\alpha}
\newcommand{\e}{\epsilon}
\renewcommand{\t}{\theta}
\renewcommand{\o}{\eta}
\newcommand{\G}{\Gamma}
\newcommand{\m}{\mathbb{R}}
\newcommand{\C}{\mathbb{C}}
\newcommand{\R}{\mathcal{R}}
\newcommand{\E}{\mathbb{E}}
\newcommand{\Q}{\mathcal{Q}}
\renewcommand{\S}{\Sigma}
\title[Bounds on the Phillips calculus]{Bounds on the Phillips calculus of abstract first order differential operators}
\author{Himani Sharma}
\thanks{Email ID: himani.sharma@anu.edu.au, Department Address: Mathematical Sciences Institute, Australian National University, Canberra ACT-2600, Australia. ORCID-0000-0001-8248-1044
}
\begin{document}

\maketitle

\begin{abstract}
For an operator generating a group on $L^p$ spaces transference results give bounds on the Phillips functional calculus also known as spectral multiplier estimates. In this paper we consider specific group generators which are abstraction of first order differential operators and prove similar spectral multiplier estimates assuming only that the group is bounded on $L^2$ rather than $L^p$. We also prove an R-bounded H{\"o}rmander calculus result by assuming an abstract Sobolev embedding property and show that the square of a perturbed Hodge-Dirac operator has such calculus.\\

\noindent
\textbf{Mathematics Subject Classification(2020):} 47A60, 42B20, 42A45, 47D03, 47F05\\
\textbf{Key Words}: spectral multiplier estimates, Phillips calculus, H{\"o}rmander calculus, Sobolev embedding.
\end{abstract}

\begingroup
\allowdisplaybreaks
\section{Introduction}
Let $X$ be a Banach space and $iD$ generate a bounded $C_0$-group $(U(t))_{t\in\m}$ on $X$. For $g\in L^1(\m)$ we define
\begin{equation*}
    \Phi_g(D)x:=\int_{-\infty}^\infty g(t)U(t)x dt, \;x\in X.
\end{equation*}
The map $(g\mapsto\Phi_g(D)):L^1(\m)\to B(X)$, where $B(X)$ is the set of all bounded operators on $X$ and $\Phi_g$ is obtained using the inverse Fourier-Stieltjes transform, is called the Phillips calculus. It is immediate that the integral exists and 
\begin{equation*}
    ||\Phi_g(D)||_{B(X)}\leq ||g||_{L^1}\sup_{t\in\m}||U(t)||_{B(X)}.
\end{equation*}
The idea behind getting a bound on the Phillips calculus of some operators is to achieve such estimates for function classes other than $L^1$.
The transference principle introduced by Cofiman and Weiss in \cite{CW} gives a great functional calculus result for such group generators. They proved that if $g\in L^1(\m)$ and $K_g:L^p(\m,X)\to L^p(\m,X)$ for $p\in[1,\infty)$ is defined as 
\begin{equation*}
    K_gf(t):=\int_{-\infty}^\infty g(s)f(t-s)ds,\;f\in L^p(\m,X)
\end{equation*}
then, 
\begin{equation*}
    ||\Phi_g(D)||_{B(X)}\leq ||K_g||_{B(L^p(\m,X))}\sup_{t\in\m}||U(t)||^2_{B(X)}
\end{equation*}
Inspired by them, Haase later in 2009 (see Theorem 3.2,\cite{HaM09}), where the technique for the proof of this theorem is explained very well in (Section 3,\cite{HaM11}) proved a transference principle where the group can grow exponentially. In this case the functions $g$ need to have holomorphic extensions to a strip of the complex plane. Kriegler in 2012 (Theorem 4.9, \cite{CK}) proved a similar result for a group that grows at most polynomially on $L^p$.  In the case of both Coifman-Weiss and Kriegler's results the functions can be compactly supported. In fact, the class of functions they consider is a variation of the H{\"o}rmander class and involves functions which are differentiable sufficiently many times and whose derivatives have appropriate decay. Since the class of smooth and compactly supported functions is not contained in $H^\infty$, therefore to obtain a calculus that works for such functions we need a result similar to Coifman-Weiss or Kriegler's result. Kriegler and Weis (Theorem 1.1, \cite{KL}) have given a very general approach in the case of semigroup generators and proved that these operators have H{\"o}rmander calculus but their result has a little weakness as their R-boundedness assumption is quite difficult to check.

This paper deals with the case when there is no available $L^p$ information (which would be required to use transference) on the group and the only information given to us is on $L^2$, that is, the group generator is either self-adjoint or similar to a self-adjoint operator. The question then arises: ``what extra properties do we need on the group or its generator?" Notice that some of the transference results that we have talked about before transfer to the derivative despite the fact that a general group generator does not have to be a differential operator whatsoever. Thus, it will turn out that our framework is well suited for (Hodge-Dirac) differential operators. The question is how are we going to exploit this structure, given only $L^2$ information? We show in this paper that if we know some $L^p-L^2$ mapping properties (which look like Sobolev embeddings and are likely to happen when the operator is differential) then we get something which is very reminiscent of Kriegler's result.

Chen, Ouhabaz, Sikora and Yan in \cite{COSY} considered second order differential operators acting on $L^p$ spaces over a doubling measure metric space and exploited certain properties including the finite-speed propagation property to obtain sharp spectral multiplier results. What we have done is similar but slightly more abstract. In fact, our results are in the same spirit, but for slightly more specific operators, as the result in their recent paper \cite{COSY20}. For instance, the H{\"o}rmander calculus result that we have obtained in our Theorem \ref{Thm3.3} for the  square of the Hodge-Dirac operator $\Pi_B$ (defined in preliminaries) implies their main result (Theorem 3.1, \cite{COSY20}) for uniformly elliptic operators in divergence form (for example, $divA\nabla$, $A\in L^\infty$) and generalises it to all of the squares of Dirac operators defined in \cite{AKM}, by Axelsson, Keith and McIntosh. Our result thus complements \cite{COSY20} by giving a different approach and substantially simpler proof. The motivation is the same: to develop techniques that are abstract enough to be applied in a wide range of settings, yet concrete enough to give strong results for differential operators.

It is worth noticing that given a self-adjoint differential operator on $L^2$ it is generally difficult to decide whether or not it generates a group on $L^p$. For instance, the first derivative in $L^p(\m)$ generates the group of translation but $i$ times the second derivative generates a group only if $p=2$. See \cite{Frey2020} by Frey and Portal for fairly specific examples of groups generated by operators with rough coefficients in $L^p$. Exponential growth of a group can be obtained by perturbation but polynomial boundedness is quite difficult to establish as can be seen in the paper by Rozendaal and Veraar (Theorem 1.1, \cite{RV}). Coifman-Weiss and Kriegler's transference results thus have assumptions that are difficult to establish while general transference results do not give a large functional calculus. We present an intermediate result suited to differential operators.

In addition to this, we have exploited the case where we take our operator to be a perturbed Hodge-Dirac operator as described in the paper \cite{FMP} by Frey, McIntosh and Portal. For these operators, we prove a much stronger result which shows that their square has R-bounded H{\"o}rmander calculus on $L^p$ spaces for $p$ in some interval. This is possible by combining the results of Frey, McIntosh, Portal \cite{FMP} with that of Kriegler-Weis (see Theorem \ref{Thm2.3}) and Kunstmann (see Theorem \ref{Thm2.2}) where an off diagonal assumption gives us the appropriate R-bound.

\subsection{Acknowledgement}
I am extremely thankful to my supervisor Pierre Portal for his continuous support and encouragement. It was his perseverance and foresight which made me kept working on this problem even after facing so many ups and downs. I am also very grateful to Dorothee Frey for her valuable suggestion to Pierre to look at the paper of Peer Kunstmann. I would like to express my heartfelt thanks to Christoph Kriegler as well for suggesting changes in our Theorem 3.2 and improving the result.
\subsection{Declarations}
\begin{itemize}
    \item \textbf{Funding}: This research is supported by the Australian Government Research Training Program (AGRTP) scholarship.
    \item \textbf{Conflict of Interest}-Not applicable
    \item \textbf{Availability of data and material}-Not applicable
    \item \textbf{Code availability}-Not applicable
\end{itemize}

\section{Preliminaries}
This section consists of the notations used in this paper and some definitions and auxiliary results required to prove the main theorems.
\subsection{Notations}
Henceforth, we fix two positive natural numbers $d$ and $n$ and denote inequalities ``up to a constant" between two positive quantities $x,y$ by $x\lesssim y$. By this we mean that there exists a constant $C>0$, independent of all relevant quantities in the statement, such that $x\leq Cy$. We denote equivalence ``up to a constant" by $x\simeq y$ which means that there exist constants $c$ and $c'$ such that $\frac{1}{c}x\leq y\leq c'x$. \\
The set of all bounded linear operators on $X$ is denoted by $B(X)$ and we write $\m^*=\m\setminus\{0\}$.\\
In this paper we will be taking all our function spaces to be $\C^n$-valued without mentioning the explicit value of $n$ which might change from one theorem to another. In fact, all the results and definitions that we are using, for instance, Lemma \ref{Lem 1},Theorem \ref{Thm2.2} and Theorem \ref{Thm2.3} are valid for $\C^n$-valued $L^p$ spaces.\\
For $p\in(1,\infty)$ and an unbounded operator $D$ on $L^p(\m^d;\C^n)$, we denote by $\mathcal{D}_p(D),$ $\R_p(D),$ $\mathcal{N}_p(D)$ its domain, range and null space, respectively.\\
We use $p_*$ to denote Sobolev exponents below $p$. That is, for $p\in[1,\infty)$, $p_*=\frac{dp}{d+p}$.
Since $2_*$ denotes one Sobolev exponent below 2 similarly $2_{**}$ denotes two Sobolev exponent below 2. 
We use the notation $2_{(*)m}$ to denote the $m^{th}$ Sobolev exponent below 2 where $m\in\mathbb{N}$.

\subsection{Definitions and some known results}
In this subsection we are focusing on some fundamental results and definitions needed in the main body of the paper. 

This paper deals with self-adjoint first order differential operators which have a holomorphic functional calculus (also called $H^\infty$-calculus). On $L^p$ the main results concerning this calculus have been developed in \cite{MR1481596,MR1364554,MR912940} and for more information about them one can refer to the lecture notes \cite{MR1394696,MR2108959} and book \cite{MR2244037}.

\begin{definition}[\textbf{Finite propagation speed}]
Let $D$ be a self-adjoint operator on $L^2(\m^d;\C^n)$. Then $e^{itD}$ is said to have finite propagation speed $\kappa_D\leq \kappa$ if 
\begin{equation*}
    \mathrm{supp}(e^{itD}u)\subset K_{\kappa|t|}:=\{x\in \m^d;\; \mathrm{dist}(x,K)\leq\kappa|t|\}\;\;\;\forall t\in\m
\end{equation*}
whenever $\mathrm{supp}(u)\subset K\subset \m^d$, for some compact set $K$. Here $\mathrm{dist}(x,K):=\inf \{d(x,y): y\in K\}$
\end{definition}

\begin{definition}[\textbf{Off-diagonal bounds}]
Let $p\in[1,2]$. A family of operators $\{T_t:t\in \mathcal{B}\}\subset B(L^2(\m^d;\C^n))$, where $\mathcal{B}\subseteq\m^*$, is said to have $L^p-L^2$ off-diagonal estimates of order $M$ if there exists a constant $C_M>0$ such that for all $t\in\mathcal{B}$, all Borel sets $E,F\subseteq\m^d$ and all $u\in L^p(\m^d;\C^n)$ with support in $F$ we have
\begin{equation*}
    ||T_tu||_{L^2(E)}\leq C_M|t|^{-d\big(\frac{1}{p}-\frac{1}{2}\big)}\bigg(1+\frac{d(E,F)}{t}\bigg)^{-M}||u||_{L^p(F)}
\end{equation*}
where $d(E,F)=\inf\{|x-y|:x\in E,y\in F\}$.
\end{definition}

\begin{lemma}[Lemma 4.3, \cite{PA}]\label{Lem 1}
Let $p\in[1,2]$ and $T$ be a linear operator which satisfies $L^p-L^2$ off-diagonal estimates in the form $||Tu||_{L^2(E)}\leq g(d(E,F))\cdot$ $||u||_{L^p(F)}$ where $E,F$ are closed cubes with $\mathrm{supp}(u)\subseteq F$ and $g$ is some function such that $s^{d|\frac{1}{p}-\frac{1}{2}|}\sum_{k\in\mathbb{Z}^d}g(\sup(|k|-1,0)s)$ is finite for any $s>0$. Then $T$ is a bounded operator on $L^p(\m^d)$.
\end{lemma}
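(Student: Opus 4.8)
The plan is to reduce the global $L^p$-boundedness of $T$ to a discrete Schur-type estimate by partitioning $\m^d$ into a lattice of unit cubes (rescaled by the parameter $s$, which we may take to be $1$ after a dilation, or keep general to track the scaling). First I would write $\m^d=\bigcup_{k\in\mathbb Z^d}Q_k$ where the $Q_k=k+[0,1)^d$ (or their $s$-dilates) are essentially disjoint cubes, and for $u\in L^p(\m^d)$ decompose $u=\sum_{j}u\mathbf 1_{Q_j}=:\sum_j u_j$. Then $Tu=\sum_j Tu_j$, and I would estimate $\|Tu\|_{L^p}$ by splitting the target space over the cubes $Q_k$ as well:
\begin{equation*}
\|Tu\|_{L^p(\m^d)}^p=\sum_{k}\|Tu\|_{L^p(Q_k)}^p\leq\sum_k\Big(\sum_j\|Tu_j\|_{L^p(Q_k)}\Big)^p.
\end{equation*}
On each pair $(Q_k,Q_j)$ I would use Hölder's inequality to pass from $L^p(Q_k)$ to $L^2(Q_k)$ (losing a factor comparable to $|Q_k|^{1/p-1/2}\simeq s^{d(1/p-1/2)}$ since the cube has bounded measure), then apply the off-diagonal hypothesis with $E=Q_k$, $F=Q_j$ and $d(E,F)\simeq\sup(|k-j|-1,0)\,s$, giving
\begin{equation*}
\|Tu_j\|_{L^p(Q_k)}\lesssim s^{d(\frac1p-\frac12)}\,g\big(\sup(|k-j|-1,0)s\big)\,\|u_j\|_{L^p(Q_j)}.
\end{equation*}

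Next I would recognize the right-hand side as a discrete convolution on $\mathbb Z^d$: setting $a_k=\|u_{\,\cdot+k}\|_{L^p(Q_{\cdot+k})}$... more precisely, with $v_k:=\|u\mathbf 1_{Q_k}\|_{L^p(Q_k)}$ and $h(m):=s^{d(1/p-1/2)}g(\sup(|m|-1,0)s)$ for $m\in\mathbb Z^d$, the bound reads $\|Tu_j\|_{L^p(Q_k)}\lesssim h(k-j)v_j$, so that $\big(\sum_j\|Tu_j\|_{L^p(Q_k)}\big)_k$ is dominated by $(h*v)_k$. By Young's inequality for convolutions on $\mathbb Z^d$, $\|h*v\|_{\ell^p(\mathbb Z^d)}\leq\|h\|_{\ell^1(\mathbb Z^d)}\|v\|_{\ell^p(\mathbb Z^d)}$. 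The hypothesis that $s^{d|1/p-1/2|}\sum_{k\in\mathbb Z^d}g(\sup(|k|-1,0)s)<\infty$ is exactly the statement that $\|h\|_{\ell^1(\mathbb Z^d)}<\infty$. Finally $\|v\|_{\ell^p(\mathbb Z^d)}^p=\sum_k\|u\mathbf 1_{Q_k}\|_{L^p}^p=\|u\|_{L^p(\m^d)}^p$ by disjointness of the cubes, and similarly $\|Tu\|_{L^p(\m^d)}^p=\sum_k\|Tu\|_{L^p(Q_k)}^p\leq\|h*v\|_{\ell^p}^p$, which closes the estimate and gives $\|Tu\|_{L^p}\lesssim\|u\|_{L^p}$.

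The main technical point — rather than a deep obstacle — is the careful bookkeeping of the geometry: one must verify that for $x\in Q_k$ and $y\in Q_j$ the distance $d(Q_k,Q_j)$ is genuinely bounded below by a constant times $\sup(|k-j|-1,0)s$ (the $-1$ accounting for adjacent cubes where the distance is zero), so that the tail function $g$ is evaluated at the right arguments and the series reindexes cleanly as a convolution in $k-j$. A secondary point is justifying the interchange of the sum over $j$ with the $L^p(Q_k)$ norm (triangle inequality) and then with the outer $\ell^p$ sum over $k$ (Young's inequality), which is legitimate since all quantities are nonnegative. One should also note that the case $p=2$ is trivial (Hölder loses nothing and the hypothesis is just $\ell^1$-summability of $g$), and for $p\in[1,2)$ the exponent $d|1/p-1/2|=d(1/p-1/2)$ matches the loss incurred in Hölder's inequality on a cube of side $s$, so the powers of $s$ balance exactly.
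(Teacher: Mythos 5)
Your argument is correct and is essentially the standard proof of this lemma (which the paper quotes from \cite{PA} without reproducing a proof): tile $\m^d$ by cubes of side $s$, lose a factor $|Q_k|^{1/p-1/2}=s^{d(1/p-1/2)}$ by H\"older on the target cube, apply the off-diagonal bound pairwise, and close with Young's inequality for discrete convolutions on $\mathbb{Z}^d$. The only point left implicit is that replacing $g(d(Q_k,Q_j))$ by $g(\sup(|k-j|-1,0)s)$ uses the (tacitly assumed, and satisfied in every application here, where $g(r)\simeq(1+r)^{-M}$) monotonicity of $g$, together with the equivalence of norms on $\mathbb{Z}^d$, which is absorbed by invoking the hypothesis at a rescaled value of $s$.
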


We shall now recall the following definition and result of operators acting on tent spaces which is of great importance in proving Theorem \ref{Thm3.2}.

\begin{definition}[\textbf{Tent spaces}]
The tent space $T^{p,2}(\m_+^{d+1};\C^n)$, first introduced by Coifman, Meyer, and Stein in \cite{MR791851}, is defined as the completion of $C_c^\infty(\m_+^{d+1};\C^n)$ with respect to the following norm for $p\in[1,\infty)$
\begin{align*}
    ||F||_{T^{p,2}}&=\bigg(\int_{\m^d}\bigg(\int_0^\infty\int_{B(x,t)}|F(t,y)|^2\frac{dydt}{t^{d+1}}\bigg)^{\frac{p}{2}}dx\bigg)^{\frac{1}{p}},
\end{align*}
where $B(x,t)$ denote balls in $\m^{d+1}_+$ with center at $x$ and radius $t$.
\end{definition}

\begin{lemma}[Lemma 2.9, \cite{FMP}]\label{Lem 2}
Let $p\in(1,\infty)$ and let $\{U_t\}_{t>0}$ be a family of operators on $L^2(\m^d;\C^n)$ with $L^2-L^2$ off diagonal bounds of order $M>\frac{d}{\min\{p,2\}}$. Then there exists $C>0$ such that for all $F\in T^{p,2}(\m_+^{d+1};\C^n)$
\begin{equation*}
    ||(t,x)\mapsto U_tF(t,.)(x)||_{T^{p,2}}\leq C||F||_{T^{p,2}}.
\end{equation*}
\end{lemma}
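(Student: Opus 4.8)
Write $\mathcal{U}F$ for the function $(t,x)\mapsto U_tF(t,\cdot)(x)$ on $\m_+^{d+1}$; the assertion is that $\mathcal{U}$ is bounded on $T^{p,2}$. The plan is to settle first the case $p=2$, then the endpoint $p=1$ by an atomic argument, and finally to reach $1<p<2$ by interpolation and $2<p<\infty$ by duality. For $p=2$ this is immediate: by Fubini the $T^{2,2}$-norm equals, up to a dimensional constant, $\big(\int_0^\infty\|F(t,\cdot)\|_{L^2(\m^d)}^2\,\tfrac{dt}{t}\big)^{1/2}$, and taking $E=F=\m^d$ in the off-diagonal estimate (so that $d(E,F)=0$) gives $\sup_{t>0}\|U_t\|_{B(L^2)}<\infty$; hence $\|\mathcal{U}F\|_{T^{2,2}}\lesssim\|F\|_{T^{2,2}}$.

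For $p=1$ I would invoke the Coifman--Meyer--Stein atomic decomposition \cite{MR791851} of $T^{1,2}$: it suffices to bound $\|\mathcal{U}a\|_{T^{1,2}}$ uniformly over atoms $a$ supported in a tent $\widehat B$ over a ball $B=B(x_B,r_B)$ with $\|a\|_{T^{2,2}}\le|B|^{-1/2}$, for which $a(t,\cdot)$ is supported in $B$ and vanishes once $t\ge r_B$. Partition $\m^d$ into the annuli $C_0=2B$ and $C_k=2^{k+1}B\setminus 2^kB$ $(k\ge1)$, so that $\mathcal{U}a=\sum_{k\ge0}\mathbf{1}_{C_k}\mathcal{U}a$. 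Since $\mathcal{U}a$ lives on $\{t<r_B\}$ and $C_k\subseteq 2^{k+1}B$, the conical square function of $\mathbf{1}_{C_k}\mathcal{U}a$ is supported in $2^{k+2}B$, a set of measure $\simeq2^{kd}|B|$, so Hölder over that set gives $\|\mathbf{1}_{C_k}\mathcal{U}a\|_{T^{1,2}}\lesssim 2^{kd/2}|B|^{1/2}\|\mathbf{1}_{C_k}\mathcal{U}a\|_{T^{2,2}}$. On the other hand, $a(t,\cdot)$ is supported in $B$ and $d(C_k,B)\gtrsim 2^kr_B$, so the $L^2$--$L^2$ off-diagonal bound supplies a factor $(1+2^kr_B/t)^{-M}\lesssim 2^{-kM}$ for every $t<r_B$; together with the Fubini identity for $\|\cdot\|_{T^{2,2}}$ and the atom normalisation this yields $\|\mathbf{1}_{C_k}\mathcal{U}a\|_{T^{2,2}}\lesssim 2^{-kM}|B|^{-1/2}$ for $k\ge1$ (the term $k=0$ being controlled directly by the uniform $L^2$-bound). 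Combining, $\|\mathbf{1}_{C_k}\mathcal{U}a\|_{T^{1,2}}\lesssim 2^{k(d/2-M)}$, and since $M>d\ge d/2$ the geometric series in $k$ converges, giving $\|\mathcal{U}a\|_{T^{1,2}}\lesssim 1$.

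For $1<p<2$ one can either interpolate the cases $p=1$ and $p=2$ via the complex interpolation of tent spaces, $[T^{1,2},T^{2,2}]_\theta=T^{p,2}$ (\cite{MR791851}), or rerun the atomic argument directly at exponent $p$ with $(p,2)$-atoms of size $|B|^{1/2-1/p}$; in either description the hypothesis $M>d/\min\{p,2\}=d/p$ is exactly what makes the relevant sums converge. For $2<p<\infty$ I would pass to the adjoint: a short test-function computation shows that $\{U_t^*\}$ has $L^2$--$L^2$ off-diagonal bounds of the same order $M$, and under the pairing $\langle F,G\rangle=\int_0^\infty\int_{\m^d}F\overline{G}\,\tfrac{dy\,dt}{t}$ one has $(T^{p,2})'=T^{p',2}$ for $p\in(1,\infty)$ together with $\langle\mathcal{U}F,G\rangle=\langle F,\mathcal{U}^*G\rangle$, where $\mathcal{U}^*$ is built from $\{U_t^*\}$. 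As $p'\in(1,2)$ when $p>2$, the earlier cases applied to $\{U_t^*\}$ make $\mathcal{U}^*$ bounded on $T^{p',2}$, whence $\|\mathcal{U}F\|_{T^{p,2}}\lesssim\|F\|_{T^{p,2}}$ by duality.

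The main obstacle is the $p=1$ (equivalently the atomic/molecular) step: one must verify that the off-diagonal decay really does dominate the volume growth $2^{kd/2}$ of the shadows of the annuli $C_k$, and keep careful track of the interplay between the truncation $t<r_B$ forced by the atom and the off-diagonal factor $(1+d(C_k,B)/t)^{-M}$. This is the only place where genuine hard analysis (as opposed to $L^2$-boundedness, tent-space interpolation, and tent-space duality) enters, and it is precisely where the order condition $M>d/\min\{p,2\}$ is needed.
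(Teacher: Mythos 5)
The paper does not prove this statement; it imports it verbatim from \cite{FMP}, where the proof is a single argument valid for all $p$ at once: decompose $F(t,\cdot)=\sum_{k\ge0}\mathbf{1}_{C_k(x,t)}F(t,\cdot)$ into dyadic annuli around $B(x,t)$, use the off-diagonal bounds to get $\|U_tF(t,\cdot)\|_{L^2(B(x,t))}\lesssim\sum_{k\ge0}2^{-kM}\|F(t,\cdot)\|_{L^2(B(x,2^{k+1}t))}$, and then apply the change-of-aperture estimate $\|F\|_{T^{p,2}_\alpha}\lesssim\alpha^{d/\min(p,2)}\|F\|_{T^{p,2}}$; the hypothesis $M>d/\min(p,2)$ is precisely what makes the resulting geometric series converge. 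Your route (atoms at $p=1$, complex interpolation for $1<p<2$, duality for $p>2$) is genuinely different, and the individual estimates you write are correct, but two points need repair. First, the alternative you offer for $1<p<2$ --- rerunning the atomic argument ``directly at exponent $p$'' --- is not available: $T^{p,2}$ admits an atomic decomposition only for $p\le1$, so interpolation $[T^{1,2},T^{2,2}]_\theta=T^{p,2}$ is the only viable form of your argument in that range. Second, and more seriously, your bookkeeping of the order $M$ does not close the duality step as written: for $p>2$ the hypothesis is only $M>d/2$, whereas ``the earlier cases applied to $\{U_t^*\}$ on $T^{p',2}$'' would, if they really required $M>d/\min(p',2)=d/p'>d/2$ as you assert, demand a strictly stronger assumption than the one available.

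The fix is already contained in your own computation: the atomic estimate gives $\|\mathbf{1}_{C_k}\mathcal{U}a\|_{T^{1,2}}\lesssim2^{k(d/2-M)}$, so the series converges as soon as $M>d/2$, not $M>d$. Hence the interpolation route yields boundedness on $T^{p,2}$ for every $p\in[1,2]$ under $M>d/2$ alone, and the duality step for $p>2$ then does go through under the stated hypothesis. You should say this explicitly instead of claiming that ``$M>d/\min\{p,2\}=d/p$ is exactly what makes the relevant sums converge'' --- that claim is inaccurate for your argument and is the source of the apparent gap in the duality step. (If your accounting is made consistent in this way, your proof actually establishes a formally stronger statement than the lemma for $p<2$; the exponent $d/\min(p,2)$ in the statement reflects the change-of-aperture constant used in \cite{FMP}, not a claimed sharp threshold.) Finally, note the standard caveat that a uniform bound on atoms must be upgraded to boundedness on all of $T^{1,2}$; here this is harmless because $\mathcal{U}$ is a priori bounded on $T^{2,2}$ and $T^{1,2}\cap T^{2,2}$ is dense, but it should be said.
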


\begin{definition}
Let $0\leq\t<\mu<\frac{\pi}{2}$. Define closed and open sectors and double sectors in the complex plane by
\begin{align*}
    \S_{\t+}&:=\{z\in\C:|\arg z|\leq\t\}\cup\{0\},\;\S_{\t-}:=-\S_{\t+},\\
    \S^0_{\mu+}&:=\{z\in\C:z\neq0,|\arg z|<\mu\},\;\S^0_{\mu-}:=-\S^0_{\mu+},\\
    \S_\t&:=\S_{\t+}\cup \S_{\t-},\; \S^0_\mu:=\S^0_{\mu+}\cup \S^0_{\mu-}.
\end{align*}
\end{definition}
\noindent
Denote by $H(\S^0_\mu)$ the space of all holomorphic functions on $\S^0_\mu$. Let further
\begin{equation*}
    \Psi_a^b(\S_\mu^0):=\{\psi\in H(\S^0_\mu):\exists C>0:|\psi(z)|\leq C|z|^a(1+|z|^{a+b})^{-1}\forall z\in\S^0_\mu\}
\end{equation*}
for every $a,b>0$, and set $\Psi(\S^0_\mu):=\bigcup_{a,b>0}\Psi^b_a(\S^0_\mu)$. We say that $\psi\in\Psi(\S^0_\mu)$ is non-degenerate if neither of the restrictions $\psi|_{\S^0_{\mu\pm}}$ vanishes identically.\\

Recall now the main definition and results related to Hardy spaces associated with bisectorial operators and refer to \cite{MR2365673,MR2163867,MR2448989} and the references therein for detailed description.

\begin{definition}[\textbf{Hardy spaces associated with bisectorial operators}]\label{Def1}
Let $0\leq\theta<\mu<\frac{\pi}{2}$ and $D$ be a $\theta$- bisectorial operator on $L^2(\m^d,\mathbb{C}^n)$ such that $\{(I+itD)^{-1};t\in\m\setminus\{0\}\}$ has $L^2-L^2$ off diagonal bounds of order $M>\frac{d}{2}$. Suppose that $D$ has a bounded $H^\infty$ functional calculus of angle $\omega\in(\theta,\mu)$ on $L^2$. Then for $p\in[1,\infty)$ and non-degenerate $\psi\in\Psi(\S^0_\mu)$, the Hardy space $H^p_{D,\psi}(\m^d,\C^n)$ associated with $D$ and $\psi$ is defined as the completion of the space
\begin{equation*}
    \{u\in\overline{\R_2(D)}:\Q_\psi u\in T^{p,2}(\m_+^{d+1},\mathbb{C}^n)\}
\end{equation*}
with respect to the norm
\begin{equation*}
    ||u||_{H^p_{D,\psi}}:=||\Q_\psi u||_{T^{p,2}}
\end{equation*}
where $\Q_\psi u(x,t):=\psi(tD)u(x),\text{ for }u\in L^2(\m^d,\mathbb{C}^n)\text{ and }t>0$.
\end{definition}

\begin{theorem}[Theorem 7.10, \cite{MR2448989}]\label{Thm2.1}
Recall that $D$ is bisectorial here. Let $\e>0$. Let $p\in(1,2]$ and $\phi,\psi\in\Psi_\e^{\frac{d}{2}+\epsilon}(\S_\mu^0)$, or $p\in[2,\infty)$ and $\phi,\psi\in\Psi_{\frac{d}{2}+\e}^\e(\S^0_\mu)$, where $\mu>\theta$ and both $\phi$ and $\psi$ are non-degenerate. Then,
\begin{itemize}
    \item[(1)] $H^p_{D,\phi}(\m^d,\C^n)=H^p_{D,\psi}(\m^d,\C^n)=:H^p_D(\m^d,\C^n)$;
    \item[(2)] For all $u\in\Psi(\S^0_\mu)$ and all $u\in H^p_D(\m^d,\C^n)$, we have
        $$||(t,x)\mapsto\psi(tD)f(D)u(x)||_{T^{p,2}}\lesssim||f||_\infty||u||_{H^p_D}.$$
\end{itemize}
In particular, $D$ has a bounded $H^\infty$ functional calculus on $H^p_D(\m^d,\C^n).$
\end{theorem}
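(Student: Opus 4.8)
The plan is to run the by-now standard tent-space machinery for a bisectorial operator $D$ with $L^2$--$L^2$ off-diagonal resolvent bounds, in which such bounds play the role that Davies--Gaffney or finite-propagation estimates play elsewhere. Two ingredients carry everything. First, for any non-degenerate $\varphi\in\Psi(\S^0_\mu)$ one can pick a non-degenerate $\eta\in\Psi(\S^0_\mu)$, in the same $\Psi$-class if desired, with $\int_0^\infty\eta(\pm s)\varphi(\pm s)\,\frac{ds}{s}=1$; the $H^\infty$ calculus of $D$ on $L^2$ then gives the Calder\'on reproducing formula $\int_0^\infty\eta(sD)\varphi(sD)\,\frac{ds}{s}=I$ in the strong sense on $\overline{\R_2(D)}$. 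Second, an averaged form of Lemma~\ref{Lem 2}: if $\{\Theta_{t,s}\}_{t,s>0}$ in $B(L^2(\m^d;\C^n))$ has (appropriately scaled) $L^2$--$L^2$ off-diagonal bounds of order $M>\tfrac d{\min\{p,2\}}$ together with scale decay $\|\Theta_{t,s}\|_{B(L^2)}\lesssim\min\{(t/s)^\a,(s/t)^\a\}$ for some $\a>0$, then $G\mapsto\big((t,x)\mapsto\int_0^\infty\Theta_{t,s}\big(G(\cdot,s)\big)(x)\,\frac{ds}{s}\big)$ is bounded on $T^{p,2}(\m_+^{d+1};\C^n)$; this follows from Lemma~\ref{Lem 2} by treating the $s$-integral as a convolution in $\log s$ against the integrable kernel $\min\{(t/s)^\a,(s/t)^\a\}$.

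For part (1), fix $\phi,\psi$ in the prescribed $\Psi$-class and an associated normalised $\eta$; inserting the reproducing formula with $\varphi=\phi$,
\begin{equation*}
\Q_\psi u(x,t)=\psi(tD)u(x)=\int_0^\infty\psi(tD)\eta(sD)\phi(sD)u(x)\,\frac{ds}{s}=\int_0^\infty\Theta_{t,s}\big(\Q_\phi u(\cdot,s)\big)(x)\,\frac{ds}{s},\quad\Theta_{t,s}:=\psi(tD)\eta(sD).
\end{equation*}
Now $\Theta_{t,s}=g_{t,s}(D)$ with $g_{t,s}(z)=\psi(tz)\eta(sz)$: writing $\psi$ and $\eta$ each as an absolutely convergent Cauchy integral of the resolvents $(I+izD)^{-1}$ along the boundary of a bisector and composing the resolvent off-diagonal bounds, one verifies that $\{\Theta_{t,s}\}$ has $L^2$--$L^2$ off-diagonal bounds of order exceeding $\tfrac d{\min\{p,2\}}$, while the decay of $\psi,\eta$ at $0$ and at $\infty$ forces $\|g_{t,s}(D)\|_{B(L^2)}\lesssim\min\{(t/s)^\a,(s/t)^\a\}$ with $\a>0$ tied to $\e$. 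The two $\Psi$-classes in the hypothesis --- strong decay at $\infty$ when $p\le2$, strong decay at $0$ when $p\ge2$ --- are exactly what pushes this off-diagonal order past the threshold in the respective regimes, the split being mirrored by the duality $(T^{p,2})^*=T^{p',2}$ and the trivial case $p=2$. The averaged Lemma~\ref{Lem 2} then gives $\|\Q_\psi u\|_{T^{p,2}}\lesssim\|\Q_\phi u\|_{T^{p,2}}$; exchanging $\phi$ and $\psi$ yields the reverse inequality, so $H^p_{D,\phi}=H^p_{D,\psi}$ with equivalent norms, and we write $H^p_D$ for this common space.

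For part (2), by the convergence lemma of the $H^\infty$ calculus it suffices to prove the estimate for $f\in\Psi(\S^0_\nu)$ with $\omega<\nu<\mu$, and then let such $f$ approximate an arbitrary $f\in H^\infty(\S^0_\nu)$, using that $f(D)\in B(L^2)$ with $\|f(D)\|_{B(L^2)}\lesssim\|f\|_\infty$ and that $f(D)$ commutes with $\eta(sD)$. Inserting the reproducing formula again,
\begin{equation*}
\psi(tD)f(D)u(x)=\int_0^\infty\Theta^f_{t,s}\big(\Q_\phi u(\cdot,s)\big)(x)\,\frac{ds}{s},\quad\Theta^f_{t,s}:=\psi(tD)f(D)\eta(sD)=\big(z\mapsto\psi(tz)f(z)\eta(sz)\big)(D),
\end{equation*}
and the factor $f$ contributes only $\|f\|_\infty$ to the off-diagonal and scale-decay estimates for $\{\Theta^f_{t,s}\}$, entering through the sup over the Cauchy contour. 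The averaged Lemma~\ref{Lem 2} then gives $\|(t,x)\mapsto\psi(tD)f(D)u(x)\|_{T^{p,2}}\lesssim\|f\|_\infty\|\Q_\phi u\|_{T^{p,2}}=\|f\|_\infty\|u\|_{H^p_D}$. Taking $\psi=\phi$ shows that $u\mapsto f(D)u$ maps the dense subspace defining $H^p_D$ into $H^p_D$ with norm $\lesssim\|f\|_\infty$, hence extends boundedly to $H^p_D$; thus $D$ has a bounded $H^\infty$ functional calculus on $H^p_D$.

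I expect the main obstacle to be the off-diagonal bookkeeping common to both parts: for the two-parameter composites $\psi(tD)\eta(sD)$ and $\psi(tD)f(D)\eta(sD)$ one needs $L^2$--$L^2$ off-diagonal bounds that are simultaneously of order $>\tfrac d{\min\{p,2\}}$ \emph{and} carry a genuine power decay in $t/s$ (resp. $s/t$), uniformly over $\|f\|_\infty\le1$. Representing each $\Psi$-factor (and a holomorphic truncation of $f$) as a resolvent contour integral and composing off-diagonal estimates is routine; the delicate point is to keep the order above the tent-space threshold while the scale decay stays strictly positive, and it is precisely this balance that pins down the exponents $\e$ and $\tfrac d2+\e$ in the hypotheses. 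Once these composite off-diagonal bounds are established, the remaining tent-space step is the Schur test of Lemma~\ref{Lem 2} along dyadic annuli combined with the one-dimensional convolution bound in the log-scale variable.
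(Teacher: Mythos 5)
This statement is imported verbatim as Theorem 7.10 of \cite{MR2448989} and the paper offers no proof of it, so there is no in-paper argument to compare against; your outline has to be judged against the standard proof in the Hardy-space literature, and in substance it reproduces that proof: a Calder\'on reproducing formula supplied by the $L^2$ $H^\infty$ calculus, followed by a Schur-type tent-space estimate for the composite family $\Theta_{t,s}=\psi(tD)\eta(sD)$ carrying both off-diagonal bounds and $\min\{(t/s)^\a,(s/t)^\a\}$ decay. The one place where your sketch understates the difficulty is the assertion that $\{\Theta_{t,s}\}$ has $L^2$--$L^2$ off-diagonal bounds of order exceeding $d/\min\{p,2\}$: for $p$ near $1$ that threshold approaches $d$, while a single factor in $\Psi_\e^{\frac{d}{2}+\e}$ only yields off-diagonal order roughly $\frac d2+\e$ from the contour-integral representation, so the claim cannot be obtained factor by factor. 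What saves the argument is that the composite symbol $\psi(tz)\eta(sz)$ decays like $|z|^{-(d+2\e)}$ at infinity when $t\simeq s$, and that the tent-space Schur lemma can trade off-diagonal order against the scale-decay exponent $\a$; it is precisely this exchange that consumes the exponents $\e$ and $\frac d2+\e$ in the hypotheses (and explains why the roles of the exponents flip between $p\le2$ and $p\ge2$). You correctly flag this as the ``delicate point,'' but be aware that it is the entire content of the theorem rather than routine bookkeeping, and that the published proofs for $p$ near $1$ often route through molecular decompositions at the $p=1$ endpoint and interpolation rather than through a direct application of a Lemma~\ref{Lem 2}-type estimate at each fixed scale.
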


\begin{definition}[$R$-boundedness]
Let $\mathcal{T}$ be a subset of $B(X,Y)$ where $X$ and $Y$ are Banach spaces. We say that $\mathcal{T}$ is R-bounded if there exists a $C<\infty$ such that
\begin{equation*}
    \E\bigg|\bigg|\sum_{k=1}^n\e_kT_kx_k\bigg|\bigg|\leq C\E\bigg|\bigg|\sum_{k=1}^n\e_kx_k\bigg|\bigg|
\end{equation*}
for any $n\in\mathbb{N},\;T_1,T_2,...,T_n\in\mathcal{T}$ and $x_1,x_2,...,x_n\in X$. The smallest admissible constant is denoted by $\mathscr{R}(\mathcal{T})$. The Rademacher sequence $(\e_k)_k$ is i.i.d., and satisfies $P(\e_k=1)=P(\e_k=-1)=\frac{1}{2}$.
\end{definition}

\begin{definition}[$R_s$-boundedness]
Let $s\in[1,\infty]$. A subset $\mathcal{T}$ of bounded (sub)linear operators on $L^p(\Omega)$ is called $R_s$-bounded in $L^p(\Omega)$ if there exists a constant $C>0$ such that for all finite families $(T_k)$ in $\mathcal{T}$ and $(f_k)$ in $L^p(\Omega)$, we have
\begin{align*}
    \big|\big|\big(\sum_k|T_kf_k|^s\big)^{\frac{1}{s}}\big|\big|_p&\leq C\big|\big|\big(\sum_k|f_k|^s\big)^{\frac{1}{s}}\big|\big|_p,\;\text{if } 1\leq s<\infty\\
    ||\sup_{k}|T_kf_k|\;||_p&\leq C||\sup_k|f_k|\;||_p,\;\text{if }s=\infty.
\end{align*}
The infimum of all such $C$ is denoted by $\mathscr{R}_s(\mathcal{T};L^p)$.
\end{definition}

Note: For $s=2$, $R_s$-boundedness is $R$-boundedness.

\begin{theorem}[Theorem 2.2, \cite{PC}]\label{Thm2.2}
Let $(\Omega,N,|\cdot|)$ be a space of homogeneous type such that $B(x,\lambda\rho)\leq c_\Omega\lambda^N|B(x,\rho)|$, $\rho>0,\lambda\geq1,x\in\Omega.$ Let $1\leq p\leq q\leq\infty$, and assume that $(S(t))_{t\in\tau}$ is a family of linear operators on $L^p\cap L^q$ such that
\begin{equation*}
    ||1_{B(x,\rho(t))}S(t)1_{A(x,\rho(t),k)}||_{p\to q}\leq |B(x,\rho(t))|^{-(\frac{1}{p}-\frac{1}{q})}h(k),\;t\in\tau,x\in\Omega,k\in\mathbb{N}_0,
\end{equation*}
where $\rho:\tau\to(0,\infty)$ is a function and the sequence $(h(k))_{k\in\mathbb{N}_0}$ satisfies $h(k)\leq c_\delta(k+1)^{-\delta}$ for some $\delta>\frac{d}{p}+\frac{1}{p'}$. Then we have
\begin{equation*}
  \{S(t):t\in\tau\} \text{ is } R_s\text{- bounded in } L^r(\Omega)
\end{equation*}
for all $(r,s)\in(p,q)\times[p,q]\cup\{(p,p),(q,q)\}$.
\end{theorem}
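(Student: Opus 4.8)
The plan is to distil from the hypothesis a single scale-invariant estimate and then feed it into a vector-valued maximal inequality. Two reductions come first. The endpoint pairs $(p,p)$ and $(q,q)$ are essentially trivial: $R_p$-boundedness on $L^p$ is exactly uniform boundedness of $\{S(t):t\in\tau\}$ on $L^p$, and likewise at $q$, so those cases reduce to a scalar estimate. For a fixed $r\in(p,q)$ I would handle the range $s\in(p,q]$ together and the value $s=p$ separately, and even the first part reduces further: the diagonal operator $(f_k)\mapsto(S(t_k)f_k)$ is literally the same operator on every $L^r(\Omega;\ell^s)$, and $[L^r(\ell^{s_0}),L^r(\ell^{s_1})]_\theta=L^r(\ell^s)$ with $\tfrac1s=\tfrac{1-\theta}{s_0}+\tfrac{\theta}{s_1}$, so complex interpolation propagates any two such bounds across $s$ with the interpolated constant.

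The workhorse is the following scale-invariant domination. Fix $t\in\tau$, write $\rho=\rho(t)$, and fix $x_0\in\Omega$. Since the annuli $A(x_0,\rho,k)$, $k\in\mathbb{N}_0$, partition $\Omega$, summing the hypothesis over $k$ and dividing by $|B(x_0,\rho)|^{1/q}$ gives
\[
\Bigl(\frac{1}{|B(x_0,\rho)|}\int_{B(x_0,\rho)}|S(t)f|^q\Bigr)^{1/q}\ \le\ \sum_{k\ge0}h(k)\Bigl(\frac{1}{|B(x_0,\rho)|}\int_{A(x_0,\rho,k)}|f|^p\Bigr)^{1/p}.
\]
By the doubling inequality each annular average is at most $C(k+1)^{N/p}$ times $\bigl(M(|f|^p)(y)\bigr)^{1/p}$ for any $y\in B(x_0,\rho)$, so the left-hand side is $\lesssim\bigl(\sum_k(k+1)^{N/p}h(k)\bigr)\,M_pf(y)$, where $M_pg:=\bigl(M(|g|^p)\bigr)^{1/p}$ and $M$ is the Hardy--Littlewood maximal operator. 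Thus the $L^q$-average of $|S(t)f|$ over every ball of radius $\rho(t)$ is controlled by $M_pf$, uniformly in $t$. Covering $\Omega$ by a $\rho(t)$-separated net with its attendant bounded overlap upgrades this to $\|S(t)f\|_{L^r}\lesssim\|M_pf\|_{L^r}$ for all $p<r\le q$, and rerunning the argument with $\ell^s$-valued $f$ — the only non-scalar ingredient being Minkowski's inequality in $\ell^s$ applied to the positive averaging operators that dominate the localized pieces — produces the $\ell^s$-valued version.

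Granting the domination, the conclusion is immediate for $r\in(p,q)$ and $s\in(p,q]$: the Fefferman--Stein vector-valued maximal inequality (valid in spaces of homogeneous type) says that $M$ is bounded on $L^{r/p}(\Omega;\ell^{s/p})$, which is precisely the assertion that $M_p$, hence $\{S(t):t\in\tau\}$, is $R_s$-bounded on $L^r$; the case $s=\infty$ uses the $\sup$-form of the same inequality. The endpoint pairs $(p,p)$ on $L^p$ and $(q,q)$ on $L^q$ are the uniform $L^p$- and $L^q$-bounds already extracted from the net/overlap step. The one case the maximal-function argument does not give for free is $s=p$ on $L^r$ with $r\in(p,q)$ — $M$ is not bounded on $L^{r/p}(\ell^1)$ — and this I would obtain by interpolating a weak-type $(p,p)$ bound against the $L^q$-improvement built into the hypothesis.

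The main obstacle is matching the \emph{sharp} threshold $\delta>\tfrac{d}{p}+\tfrac{1}{p'}$ asserted in the statement, rather than the strictly larger exponent (of order $\tfrac{d}{p}+1$) produced by the estimate above, where one simply multiplies the annulus-to-ball volume ratio $(k+1)^{N/p}$ against $h(k)$ and sums in $k$. Recovering the missing $\tfrac1p$ — and, as noted, the boundary case $s=p$ — forces one not to discard the off-diagonal cancellation among the pieces $1_{B(x_j,\rho)}S(t)1_{A(x_j,\rho,k)}$ sitting at a common distance level $k$: one establishes a weak-type $(p,p)$ estimate for the genuinely off-diagonal part of $S(t)$ and interpolates it against the $L^p\to L^q$-gain, in the spirit of the Calderón--Zygmund theory for non-integral operators of Blunck and Kunstmann. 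Carrying this out uniformly over $t\in\tau$ and in a form compatible with the $\ell^s$-valued passage (so the Fefferman--Stein step still closes the argument) is the delicate part; beyond it, one must track carefully the geometry of the space of homogeneous type — existence of separated nets, the overlap constant, and the homogeneity dimension $N$ (the ``$d$'' of the hypothesis) governing the volume ratios — and the regime split $s\le r$, $s\ge r$, $s=\infty$ in the definition of $R_s$-boundedness.
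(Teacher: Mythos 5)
First, a point of order: the paper does not prove this statement. It is quoted verbatim (as Theorem 2.2 of Kunstmann's paper \cite{PC}) in the preliminaries and used as a black box in the proof of Theorem \ref{Thm3.3}, so there is no in-paper proof to compare yours against. What follows assesses your sketch on its own terms.

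Your skeleton is the right one for results of this type: the endpoints $(p,p)$ and $(q,q)$ reduce to uniform boundedness, the hypothesis is converted into a ball-average domination by $M_pf=(M(|f|^p))^{1/p}$, and Fefferman--Stein closes the interior cases. But the two places where you say the real work lies are exactly where the proposal has genuine gaps, and the fix you propose for the first one is not the right tool. The threshold $\delta>\tfrac{N}{p}+\tfrac{1}{p'}$ does not require any weak-type $(p,p)$ estimate or Blunck--Kunstmann-style interpolation of the off-diagonal part; it comes from an elementary H\"older inequality in the annulus index. Writing $a_k=\bigl(|B|^{-1}\int_{A_k}|f|^p\bigr)^{1/p}$, split $h(k)a_k=\bigl(h(k)(k+1)^{\epsilon}\bigr)\cdot\bigl((k+1)^{-\epsilon}a_k\bigr)$ and apply H\"older with exponents $(p',p)$. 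Because the annuli are \emph{disjoint}, the $\ell^p$ factor collapses after dyadic grouping in $k$: $\sum_{2^j\le k+1<2^{j+1}}(k+1)^{-\epsilon p}\int_{A_k}|f|^p\le 2^{-j\epsilon p}\int_{B(x,2^{j+1}\rho)}|f|^p\lesssim 2^{j(N-\epsilon p)}|B|\,M(|f|^p)(y)$, which sums as soon as $\epsilon p>N$; the $\ell^{p'}$ factor $\sum_k\bigl(h(k)(k+1)^{\epsilon}\bigr)^{p'}$ is finite as soon as $\delta>\epsilon+\tfrac{1}{p'}$. Letting $\epsilon\downarrow N/p$ gives exactly the stated threshold. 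Your version, which bounds each annular mass separately by the full ball at scale $(k+1)\rho$, throws away the disjointness and that is the sole source of the lost $\tfrac1p$.

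The second gap is the sentence ``rerunning the argument with $\ell^s$-valued $f$.'' For a single $t$ the covering at scale $\rho(t)$ works, but in the $R_s$-bound the operators $S(t_k)$ live at different scales $\rho(t_k)$, so there is no common covering and no single application of Minkowski that produces the $\ell^s$-valued domination you assert. The standard way to combine scales (and this is also what rescues the endpoint $s=p$, which you correctly note is inaccessible to Fefferman--Stein on $L^{r/p}(\ell^1)$) is to dualize: for $s\le r$ test $\sum_k|S(t_k)f_k|^s$ against $w\in L^{(r/s)'}$, use the hypothesis and H\"older on each ball of the $k$-th covering to get $\int|S(t_k)f_k|^sw\lesssim\int|f_k|^s\bigl(M(w^{u})\bigr)^{1/u}$ with $u=(q/s)'$, and then use the scalar boundedness of $M$ on $L^{(r/s)'/u}$, which holds precisely when $r<q$. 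This replaces the vector-valued maximal inequality on the $f$-side by a scalar maximal inequality on the single dual weight, is uniform over the scales $\rho(t_k)$, and covers $s=p$; the remaining regime $s>r$ (including $s=q$ and $s=\infty$) is then handled by Fefferman--Stein or by duality on the adjoint family. Without some such mechanism your argument proves the interior cases $p<s<q\,$, $p<r<q$ only, and the claimed reduction of $s=p$ to ``a weak-type $(p,p)$ bound interpolated against the $L^q$-gain'' is not a proof.
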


\noindent
We shall now briefly introduce the H{\"o}rmander functional calculus and the result on it given by Kriegler and Weis in \cite{KL}.

The classical spectral multiplier theorem of Mikhlin- H{\"o}rmander shows that if $f$ is a bounded function defined on $(0,\infty)$ and $u(f)=f(-\Delta)$ is an operator on $L^p(\m^d)$ defined as $\mathcal{F}[u(f)g](\xi)=f(|\xi|^2)\mathcal{F}[g](\xi)$, then $u(f)$ is bounded on $L^p(\m^d)$ for $p\in(1,\infty)$ provided $f$ satisfies
\begin{equation}\label{eq1}
    \underset{R>0}{\sup}\int_{R/2}^{2R}\Big|t^k\Big(\frac{d}{dt}\Big)^{(k)}f(t)\Big|^2\frac{dt}{t}<\infty\;\;\;(k=0,1,...,\a),\;\a=\ceil*{d/2}.
\end{equation}
Let us denote by $\mathcal{H}^\a_2$ the ``H{\"o}rmander class" of all functions $f$ satisfying \eqref{eq1}. Then by the above Fourier multiplier theorem
\begin{equation*}
    f\in\mathcal{H}^\a_2\mapsto f(-\Delta)\in B(L^p(\m^d)).
\end{equation*}
Let $\alpha>\frac{1}{2}$. We can also define the $\mathcal{H}^\a_2$ class as
\begin{equation*}
    \mathcal{H}^\a_2=\{f\in L^2_{loc}(\m_+):||f||_{\mathcal{H}^\a_2}=\underset{t>0}{\sup}||\phi f(t\cdot)||_{W^\a_2(\m)}<\infty\},
\end{equation*}
where $\phi$ is a non-zero $C_c^\infty(0,\infty)$ function (different choices resulting in equivalent norms) and $W^\a_2(\m)$ stands for the usual Sobolev space. We say that an operator $D$ defined on a Banach space $X$ has a H{\"o}rmander calculus if for some $\theta\in(0,\pi)$ and any $f\in H^\infty(\Sigma_\theta)$
\begin{equation*}
||f(D)||_{B(X)}\lesssim ||f||_{\mathcal{H}^\a_2}.
\end{equation*}

\begin{theorem}[Theorem 1.1, \cite{KL}]\label{Thm2.3}
Let $A$ be a 0-sectorial operator on a space $L^p(U), 1<p<\infty$, where $U$ is an open subset of $\m^d$. Suppose further that $A$ has a bounded $H^\infty(\S_\t)$ calculus for some $\t\in(0,\frac{\pi}{2})$. Then R-boundedness of the set 
\begin{equation*}
    (S)_\a:=\bigg\{\bigg(\frac{\pi}{2}-|\t|\bigg)^\a\exp(-te^{i\t}A): t>0,\t\in\bigg(\frac{-\pi}{2},\frac{\pi}{2}\bigg)\bigg\}
\end{equation*}
implies that $A$ has an R-bounded $\mathcal{H}^\gamma_2$ calculus for $\gamma>\a+\frac{1}{2}$, that is, the set
\begin{equation*}
    \Big\{f(A):||f||_{\mathcal{H}^\gamma_2}\leq1\Big\} \; \text{ is R-bounded}.
\end{equation*}
\end{theorem}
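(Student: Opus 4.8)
The plan is to run the classical Mihlin–Hörmander argument using the analytic semigroup $e^{-te^{i\theta}A}$ in place of the wave group (which is unavailable here), while tracking the boundary behaviour as $|\theta|\to\tfrac\pi2$ carefully enough to recover the sharp exponent. \emph{Step 1 (reduction to one dyadic scale).} Fix $\phi\in C_c^\infty(\tfrac12,2)$ with $\sum_{n\in\mathbb Z}\phi(2^{-n}\lambda)=1$ on $(0,\infty)$ and write $f=\sum_n f\phi(2^{-n}\cdot)$. Since $A$ is $0$-sectorial with a bounded $H^\infty$-calculus on $L^p(U)$, it has a bounded — in fact $R$-bounded — Littlewood–Paley decomposition (the square-function equivalence $\|x\|_p\simeq\|(\sum_n|\psi(2^{-n}A)x|^2)^{1/2}\|_p$), which together with Kahane's contraction principle reduces the asserted $R$-boundedness of $\{f(A):\|f\|_{\mathcal H^\gamma_2}\le1\}$ to a uniform-in-$n$ $R$-bound for the pieces $(f\phi(2^{-n}\cdot))(A)$, with the right dependence on $\|f\|_{\mathcal H^\gamma_2}$. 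Rescaling by $t\mapsto 2^{-n}t$ (which leaves $(S)_\alpha$ invariant) and inserting a fixed cut-off $\psi\in C_c^\infty(\tfrac14,4)$ equal to $1$ on $[\tfrac12,2]$, it suffices to bound $g(A)=g(A)\psi(A)$ for $g$ supported in $[\tfrac12,2]$ with $\|g\|_{W^\gamma_2}\le1$.

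\emph{Step 2 (semigroup representation and the exponent).} With $g^\sharp(s):=g(e^s)$ one has $g(A)=\tfrac1{2\pi}\int_{\mathbb R}\widehat{g^\sharp}(u)A^{iu}\,du$, and Mellin–Plancherel plus the description of $\mathcal H^\gamma_2$ on the annulus give $\|(1+|u|)^\gamma\widehat{g^\sharp}\|_{L^2_u}\lesssim\|g\|_{W^\gamma_2}\le1$. The crux is the estimate
\begin{equation*}
\mathscr R\big(\{(1+|u|)^{-\alpha}A^{iu}\psi(A):u\in\mathbb R\}\big)<\infty ,
\end{equation*}
i.e.\ that $i\log A$ generates, on the relevant band, an $R$-bounded group of polynomial type $\alpha$. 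To obtain it, start from the (regularised) subordination identity $A^{iu}\psi(A)=\Gamma(k-iu)^{-1}\int_0^\infty t^{-iu}(tA)^ke^{-tA}\psi(A)\,\tfrac{dt}{t}$ ($k\in\mathbb N$ fixed), rotate the time contour by Cauchy's theorem so that $e^{-tA}$ becomes $e^{-e^{i\theta}tA}$ for any $|\theta|<\tfrac\pi2$, and choose $\theta=\theta(u)$ with $\operatorname{sgn}\theta(u)=-\operatorname{sgn}u$ and $\tfrac\pi2-|\theta(u)|\sim|u|^{-1}$. Stirling's asymptotics $|\Gamma(k-iu)|\sim\sqrt{2\pi}\,|u|^{k-1/2}e^{-\pi|u|/2}$ turn the scalar prefactor $e^{\theta(u)u}/|\Gamma(k-iu)|$ into $\asymp|u|^{1/2-k}$, while a Cauchy-integral-in-time argument gives $\|(tA)^ke^{-e^{i\theta(u)}tA}\psi(A)\|\lesssim t^k(\tfrac\pi2-|\theta(u)|)^{-\alpha}e^{-ct\cos\theta(u)}$ with the constant governed by $\mathscr R((S)_\alpha)$; integrating in $t$ the powers of $k$ cancel, and a naive triangle-inequality bound yields only $\{(1+|u|)^{-(\alpha+1/2)}A^{iu}\psi(A)\}$ $R$-bounded, whereas exploiting the oscillation $t^{-iu}$ in the time integral via an $L^2$-orthogonality (stationary-phase) estimate removes the extra half-power and gives the displayed sharp bound. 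With it, $g(A)=\sum_j\tfrac1{2\pi}\int_{|u|\sim2^j}\widehat{g^\sharp}(u)A^{iu}\psi(A)\,du$ has $j$-th term of $R$-norm (over $g$) $\lesssim 2^{j\alpha}\,2^{j/2}\|\widehat{g^\sharp}\|_{L^2(|u|\sim2^j)}$, and a Cauchy–Schwarz summation over $j$ converges exactly when $\gamma>\alpha+\tfrac12$. (Alternatively, once the group statement is available one may invoke Kriegler's transference principle for polynomially bounded groups on $L^p$.)

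\emph{Step 3 ($R$-bounded averaging and reassembly).} The operators in Step 2 are built as integrals of members of $(S)_\alpha$ against $L^1$ scalar kernels; by the standard fact that integrating an $R$-bounded family against an $L^1$ density produces an operator in its closed absolutely convex hull — with $\mathscr R$-constant at most ($L^1$-norm of the density)$\,\cdot\,\mathscr R$ of the family — the kernel bounds of Step 2 become the required $R$-bounds. Undoing the dilation recovers the uniform-in-$n$ $R$-bound for $(f\phi(2^{-n}\cdot))(A)$; summing over $n$ via the $R$-bounded Littlewood–Paley decomposition of Step 1 and the usual two-parameter Rademacher averaging yields the $R$-boundedness of $\{f(A):\|f\|_{\mathcal H^\gamma_2}\le1\}$, first for $f\in C_c^\infty(0,\infty)$ and then, via the $H^\infty$-calculus and an approximation $f\mapsto f\psi_\varepsilon(A)$, for all $f$ in the Hörmander class.

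\emph{Main obstacle.} The delicate point is the displayed estimate in Step 2: one must justify the contour rotation with a $u$-dependent angle, balance Stirling's subdominant factor $|u|^{-(k-1/2)}$ against the semigroup blow-up $(\tfrac\pi2-|\theta|)^{-\alpha}$ and the rate $\tfrac\pi2-|\theta(u)|\sim|u|^{-1}$ so that the auxiliary power $k$ disappears, and — most subtly — replace the triangle inequality in the time integral by an $L^2$-orthogonality estimate that exploits the oscillation of $t^{-iu}$, which is exactly what brings the threshold down from $\gamma>\alpha+1$ to the sharp $\gamma>\alpha+\tfrac12$. It is here that the $R$-boundedness (not merely uniform boundedness) of $(S)_\alpha$ is indispensable, the conclusion being an $R$-bound.
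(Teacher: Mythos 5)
First, a point of reference: the paper offers no proof of this statement --- it is quoted verbatim as Theorem 1.1 of Kriegler--Weis \cite{KL} and used as a black box --- so your attempt can only be measured against the argument in \cite{KL} itself. Your Steps 1 and 3 (dyadic Paley--Littlewood reduction for a $0$-sectorial operator with $H^\infty$ calculus, and the averaging principle that an integral of an R-bounded family against an $L^1$ density stays in its closed absolutely convex hull with R-constant controlled by the $L^1$ norm) are sound and do match the architecture of the actual proof.

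The genuine gap is in Step 2. Your route passes through the imaginary powers and needs the sharp bound $\mathscr R\big(\{(1+|u|)^{-\alpha}A^{iu}\psi(A)\}\big)<\infty$. As you yourself compute, the subordination-plus-contour-rotation argument with $\tfrac\pi2-|\theta(u)|\sim|u|^{-1}$ and Stirling gives only the exponent $\alpha+\tfrac12$ by the triangle inequality, and the claimed ``$L^2$-orthogonality / stationary phase'' rescue is asserted rather than proved --- and it is unlikely to exist: the phase $t^{-iu}=e^{-iu\log t}$ is non-stationary, so the only available gain is integration by parts in $t$, but each such integration differentiates $e^{-e^{i\theta(u)}tA}$ and costs a factor $(\cos\theta(u))^{-1}\sim|u|$, exactly cancelling the $|u|^{-1}$ gained from the phase. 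Moreover R-boundedness gives no access to $L^2_t$-orthogonality of the operator family. With only the lossy bound $(1+|u|)^{-(\alpha+1/2)}$ your scheme yields $\gamma>\alpha+1$, not the stated $\gamma>\alpha+\tfrac12$. The actual proof in \cite{KL} avoids imaginary powers entirely: for $g$ supported in an annulus one writes $g(\lambda)=\tfrac1{2\pi}\int\widehat{g e^{(\cdot)}}(s)\,e^{-(1-is)\lambda}\,ds$, observes that $1-is=\sqrt{1+s^2}\,e^{i\theta_s}$ with $\tfrac\pi2-|\theta_s|\simeq(1+|s|)^{-1}$, so that $e^{-(1-is)A}$ is $(1+|s|)^{\alpha}$ times a member of $(S)_\alpha$, and then a single Cauchy--Schwarz $\|(1+|s|)^{\alpha}\widehat{g e^{(\cdot)}}\|_{L^1}\lesssim\|g\|_{W^\gamma_2}$ for $\gamma>\alpha+\tfrac12$ combined with your Step 3 averaging gives the sharp threshold with no orthogonality argument at all. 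In short: the oscillation must be absorbed into the semigroup parameter itself, not into a separate group of imaginary powers.
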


\noindent

As a specific case of a first order differential operator we have considered perturbed Hodge-Dirac operators of the form $\Pi_B=\Gamma+\Gamma^*_B=\Gamma+B_1\Gamma^*B_2$, where $\Pi=\Gamma+\Gamma^*$ is a Hodge-Dirac operator. One simple example of it is given by taking a $2\times2$ matrix
   $\Pi= \begin{pmatrix}
    0 & -div\\
    \nabla & 0
    \end{pmatrix}$ defined on $L^2(\m^d;\C)\oplus L^2(\m^d;\C^n)$
where, $\Gamma= \begin{pmatrix}
0&-div\\
0&0
\end{pmatrix}$ and $\Gamma^*=\begin{pmatrix}
0&0\\
\nabla&0
\end{pmatrix}$ .
The type of $\Pi_B$ we need on $L^2(\m^d;\C^n)$ is defined and explained below. We refer to the papers \cite{AKM,PTM,FMP} for more details on this operator. See Corollary \ref{Cor1} and Corollary \ref{Cor2} for results on it. 

\begin{definition}[\textbf{Perturbed Hodge-Dirac operators}]\label{Def8}
A perturbed Hodge-Dirac operator defined on $L^2(\m^d;\C^n)$ is an operator of the form
\begin{equation*}
    \Pi_B:=\G+\G_B^*:=\G+B_1\G^*B_2,
\end{equation*}
where $\Pi=\G+\G^*$ is a Hodge-Dirac operator with constant coefficients, and $B_1,B_2$ are multiplication operators by $L^\infty(\m^d,B(\C^n))$ functions which satisfy
\begin{align*}
    \G^*B_2B_1\G^*&=0,\text{ in the sense that }\mathcal{R}_2(B_2B_1\G^*)\subset\mathcal{N}_2(\G^*);\\
    \G B_1B_2\G&=0,\text{ in the sense that }\mathcal{R}_2(B_1B_2\G)\subset\mathcal{N}_2(\G);
\end{align*}
\begin{align*}
    Re(B_1\G^*u,\G^*u)&\geq\kappa_1||\G^*u||_2^2,\;\forall u\in\mathcal{D}_2(\G^*),\text{ and }\\
    Re(B_2\G u,\G u)&\geq \kappa_2||\G u||_2^2,\;\forall u\in\mathcal{D}_2(\G)
\end{align*}
for some $\kappa_1,\kappa_2>0.$
\end{definition}
\noindent
Such operators satisfy the following invertibility properties, where $\frac{1}{p}+\frac{1}{p'}=1$
\begin{equation*}
    (\Pi_B(p))\;\;\;\;||u||_p\leq C_p||B_1u||_p\;\forall u\in\overline{\mathcal{R}_p(\G^*)}\;\text{  and  }\;||v||_{p'}\leq C_{p'}||B_2^*v||_{p'}\;\forall v\in\overline{\mathcal{R}_{p'}(\G)}
\end{equation*}
when $p=2$. A perturbed Hodge-Dirac operator $\Pi_B$ Hodge decomposes $L^p(\m^d;\C^n)$ for some $p\in(1,\infty)$, if $(\Pi_B(p))$ holds and there is a splitting into complemented subspaces
\begin{equation*}
    L^p(\m^d;\C^n)=\mathcal{N}_p(\Pi_B)\oplus\overline{\mathcal{R}_p(\Pi_B)}=\mathcal{N}_p(\Pi_B)\oplus\overline{\mathcal{R}_p(\G)}\oplus\overline{\mathcal{R}_p(\G^*)}.
\end{equation*}
It is proved in (Proposition 2.2,\cite{AKM}) for $p=2$. Here we consider the open interval $(p_H,p^H)$, where $1\leq p_H<2<p^H\leq\infty$ (see Proposition 2.17,\cite{FMP}) on which $\Pi_B$ Hodge decomposes $L^p$. Such $p_H<2<p^H$ always exist.\\

One of the important and very useful results that Frey, McIntosh and Portal have obtained in \cite{FMP} is that the Hardy space $H^p_{\Pi_B}$ associated with the perturbed Hodge-Dirac operator $\Pi_B$ coincides with the $L^p$ closure of the $\mathcal{R}_p(\Pi_B)$ whenever $p\in(p_H,p^H)$.

\section{Main Results}

The following lemma is very useful in proving our first theorem. It can be seen that the idea to have an assumption on the resolvent operator, to be bounded from $L^{p_*}-L^p$, in this lemma comes from [Lemma 7.1, \cite{FMP}] for the operator $\Pi_B$.
\begin{lemma}\label{Lem3}
Let $D$ be a self-adjoint operator on $L^2(\m^d;\C^n)$ and suppose that $(I+D^2)^{-1/2}\in B(L^{p_*},L^p)$ for all $p\in(1,2]$. Let $k=d\big(\frac{1}{p}-\frac{1}{2}\big)$, then $(I+D^2)^{-k/2}\in B(L^p,L^2)$.
\end{lemma}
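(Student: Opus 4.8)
The plan is to read the hypothesis as an abstract ``first order Sobolev embedding'' for $A:=I+D^2$ and to bootstrap it by duality, interpolation, iteration, and a final Stein interpolation to reach the (possibly non-integer) power $k$. Throughout, $A$ is self-adjoint with $A\geq I$, so by the spectral theorem $\|A^{-s}\|_{B(L^2(\m^d))}\leq 1$ whenever $\operatorname{Re}s\geq0$; in particular the purely imaginary powers $A^{-it/2}$ are $L^2$-isometries.

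First I would symmetrise the hypothesis. Since $\lambda\mapsto(1+\lambda^2)^{-1/2}$ is real-valued, $A^{-1/2}$ is self-adjoint on $L^2$; dualising the bounded map $A^{-1/2}:L^{p_*}\to L^p$ (using that $L^{p_*}\cap L^2$ and $L^{p'}\cap L^2$ are dense) yields a bounded extension $A^{-1/2}:L^{p'}\to L^{(p_*)'}$, and $(p_*)'=(p')^*$ because both reciprocals equal $\tfrac1{p'}-\tfrac1d$. Both of these mapping properties have the same ``derivative gap'' $\tfrac1{p_*}-\tfrac1p=\tfrac1{p'}-\tfrac1{(p')^*}=\tfrac1d$, so complex interpolation of the single operator $A^{-1/2}$ between the two endpoints gives, for $\theta\in[0,1]$, boundedness $A^{-1/2}:L^{c_\theta}\to L^{e_\theta}$ with $\tfrac1{c_\theta}-\tfrac1{e_\theta}=\tfrac1d$ and $\tfrac1{e_\theta}=(1-\theta)\tfrac1p+\theta\tfrac1{(p')^*}$ sweeping the interval $[\tfrac12-\tfrac{k+1}d,\tfrac12+\tfrac kd]$. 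Conclusion of this step: $A^{-1/2}:L^{q_*}\to L^q$ is bounded for every $q\in[p,(p')^*]$, a uniform first order bound around the exponent $2$.

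Next, set $M:=\lceil k\rceil$. For $j=0,1,\dots,M-1$ one has $j<k$, so the Sobolev exponent $2_{(*)j}$ lies in $[p,(p')^*]$; composing the bounds from the previous step along the chain $L^{2_{(*)M}}\to L^{2_{(*)(M-1)}}\to\cdots\to L^{2_*}\to L^2$ produces a bounded operator $A^{-M/2}:L^{2_{(*)M}}\to L^2$. Finally I would apply Stein interpolation to the analytic family $z\mapsto A^{-z/2}$ on the strip $\{0\leq\operatorname{Re}z\leq M\}$ (admissible: it is uniformly $L^2$-bounded there, and $z\mapsto\langle A^{-z/2}u,v\rangle$ is holomorphic for $u,v$ simple). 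On $\operatorname{Re}z=0$ we have $\|A^{-it/2}\|_{B(L^2)}=1$; on $\operatorname{Re}z=M$ we write $A^{-(M+it)/2}=A^{-it/2}\circ A^{-M/2}$ and combine the previous step with $\|A^{-it/2}\|_{B(L^2)}=1$ to get $\|A^{-(M+it)/2}\|_{B(L^{2_{(*)M}},L^2)}\leq C$ uniformly in $t$. Stein's theorem then gives $A^{-M\theta/2}:L^{q_\theta}\to L^2$ with $\tfrac1{q_\theta}=\tfrac12+\tfrac{M\theta}d$ for $\theta\in[0,1]$; choosing $\theta=k/M$ gives $\tfrac1{q_\theta}=\tfrac12+\tfrac kd=\tfrac1p$, i.e. $(I+D^2)^{-k/2}\in B(L^p(\m^d),L^2(\m^d))$.

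I expect the delicate point to be this last step. One cannot Stein-interpolate $z\mapsto A^{-z/2}$ directly between an $L^{p_*}\to L^p$ edge and an $L^2\to L^2$ edge, because the imaginary powers $A^{-it/2}$ are not known to be bounded on any $L^q$ with $q\neq2$. The device that rescues the argument is to reach $L^2$ on the target side first, via the integer iteration, so that on the edge $\operatorname{Re}z=M$ the imaginary factor $A^{-it/2}$ is only ever applied as an $L^2$-isometry after $A^{-M/2}$; this is what yields the uniform edge bounds. The duality step is also essential: interpolating the hypothesis only against the trivial $L^2\to L^2$ bound would produce operators with derivative gap $\theta/d<\tfrac1d$, too weak to iterate. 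One minor technicality to flag: if $k$ is comparable to $d/2$ the exponent $2_{(*)M}$ may be less than $1$, so that $L^{2_{(*)M}}$ is only quasi-Banach; complex and Stein interpolation remain valid for quasi-Banach lattices, and if one assumes $p_*\geq1$ (natural for the hypothesis to be a genuine Banach-space statement) then $M\leq d/2$ and every space occurring in the argument is Banach.
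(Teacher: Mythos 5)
Your proof is correct, and its engine is the same as the paper's: compose the first-order bound with itself to reach an integer power $(I+D^2)^{-M/2}:L^{2_{(*)M}}\to L^2$, then apply Stein interpolation to the analytic family $z\mapsto (I+D^2)^{-z/2}$ against the unimodular bound of the imaginary powers on $L^2$ --- precisely the device you single out as the delicate point. (The paper runs this interpolation level by level, at powers $\tfrac12,1,\tfrac32,\dots$, rather than once at the end; that is cosmetic.) The one genuine difference is your preliminary duality-plus-Riesz--Thorin step. The paper's proof invokes $(I+D^2)^{-1/2}:L^{q_*}\to L^q$ at $q=2,2_*,2_{**},\dots$ directly, which the lemma's literal hypothesis (a single exponent $p$) does not supply and which is justified only in the context of Theorem \ref{Thm3.1}, where the embedding is assumed for all $p\in(1,\infty)$. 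By dualising the self-adjoint operator $(I+D^2)^{-1/2}$ to obtain $L^{p'}\to L^{(p')^*}$ and interpolating, you recover the whole family $L^{q_*}\to L^q$ for $q\in[p,(p')^*]\supseteq[p,2]$ from the single assumed bound, so your argument proves the lemma exactly as stated --- at the price that this duality step (unlike everything else) genuinely requires $p_*\geq 1$, i.e.\ $p\geq d/(d-1)$, for $(L^{p_*})^*=L^{(p_*)'}$ to make sense. Your closing caveat about quasi-Banach exponents is apt; the paper handles the same issue only through the restriction $2_{(*)m}>1$ in its induction and a rather terse final reduction.
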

\begin{proof}
 Let $T=I+D^2$. By hypothesis we have  $||T^{\frac{-(1+is)}{2}}||_{B(L^{2_{*}},L^2)}<\infty$ and $||T^{\frac{-is}{2}}||_{B(L^{2},L^2)}$ $<\infty$. Thus, by applying Stein's Interpolation Theorem (Theorem 1, \cite{Stein}) we obtain that $||T^{-k/2}||_{B(L^q,L^2)}<\infty$ for  $k=d(\frac{1}{q}-\frac{1}{2})$ where $q\in[2_*,2]$. Now notice that $||T^{-(1+is)}||_{B(L^{2_{**}},L^2)}$ $<\infty$ and $||T^{-is}||_{B(L^2,L^2)}<\infty$ for all $s\in\m$. Again applying Stein's interpolation theorem we obtain $||T^{-k'}||_{B(L^r,L^2)}<\infty$ where  $k'=\frac{d}{2}(\frac{1}{r}-\frac{1}{2})$ for $r\in[2_{**},2]$. Replacing $k'$ by $k/2$ we obtain that $||T^{-k/2}||_{B(L^r,L^2)}<\infty$ for $k=d(\frac{1}{r}-\frac{1}{2})$. By induction, we obtain that for any $p\in[2_{(*)m},2]$ where $2_{(*)m}>1$ for some $m$ and  $k=d(\frac{1}{p}-\frac{1}{2})$ we have $||T^{-k/2}||_{B(L^p,L^2)}<\infty$. Thus for a given $p\in(1,2]$ there exists a $q>p$ such that $q_*=p$ and there exists an $m\in\mathbb{N}$ such that $2_{(*){m+1}}<q_*\leq2_{(*)m}$. Hence, by the above argument for $k=d(\frac{1}{p}-\frac{1}{2})$ we have $||T^{-k/2}||_{B(L^p,L^2)}<\infty$.
\end{proof}

\begin{theorem}\label{Thm3.1}
Let $D$ be a self-adjoint operator on $L^2(\m^d;\C^n)$ and let the $C_0$-group $(e^{i\xi D})_{\xi\in\m}$ generated by $iD$ have finite speed of propagation. Suppose also that for all $q\in(1,2]$,\; $(I+D^2)^{-1/2}\in B(L^{q_*},L^q)$, such that $q_*>1$. Let $p\in (1,\infty)$, then for all $a\in C_c^\infty(\m)$, $\beta= d\big|\frac{1}{p}-\frac{1}{2}\big|+1$ and for any positive integer $M>1+d\cdot\frac{p'}{2}$ where $\frac{1}{p}+\frac{1}{p'}=1$ we have,
\begin{equation}
    ||a(D)||_{B(L^p)}\lesssim\sum_{j=0}^{M}\bigg|\bigg|\mathcal{F}\big\{(1+|\cdot|^2)^{\beta/2}\bigg(\frac{d}{d(\cdot)}\bigg)^{(j)}a(\cdot)\big\}\bigg|\bigg|_\infty.
\end{equation}
\end{theorem}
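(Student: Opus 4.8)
The plan is to realize $a(D)$ as a superposition of the group $e^{i\xi D}$ via the Fourier inversion formula, and then exploit the finite propagation speed together with the $L^{p_*}\to L^p$ Sobolev-type estimates (upgraded through Lemma \ref{Lem3}) to control the operator on $L^p$ by an $L^2$ estimate on dyadic pieces. First I would write $a(D) = \int_{\m} \hat a(\xi)\, e^{i\xi D}\, d\xi$ (up to normalising constants), and split the integral into the region $|\xi| \leq 1$ and the dyadic annuli $|\xi| \simeq 2^k$, $k \geq 0$. On each piece the support of $e^{i\xi D}u$, for $u$ supported in a fixed cube, is contained in a $\kappa|\xi|$-neighbourhood of that cube, by the finite propagation speed hypothesis; this is the structural fact that replaces the $L^p$-boundedness of the group that one would use in a transference argument.

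The second step is to obtain, for each dyadic block, an $L^p \to L^p$ bound with the correct decay in $k$. Here I would insert the factor $(1+D^2)^{-\beta/2}(1+D^2)^{\beta/2}$ and use Lemma \ref{Lem3} (with $k = d(\tfrac1p-\tfrac12)$, and the extra smoothing accounting for the ``$+1$'' in $\beta$) to move from $L^p$ to $L^2$ at the cost of $\|(1+D^2)^{\beta/2} a_k(D)\|$-type quantities, where $a_k$ is the $k$-th Littlewood--Paley piece of $a$. On $L^2$ the operator $(1+D^2)^{\beta/2} a_k(D)$ is handled by the spectral theorem: its norm is $\sup_{\lambda}(1+\lambda^2)^{\beta/2}|a_k(\lambda)|$, which by the Fourier support of $a_k$ and integration by parts (using derivatives of $a$ up to order $M+2$) is controlled by $\sup_j \|\mathcal F\{(1+|\cdot|^2)^{\beta/2}(\tfrac{d}{d\cdot})^{(j)}a\}\|_\infty$. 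One must then sum over $k$; the finite-propagation geometry turns the $L^p \to L^2$ estimate on a cube into a genuine $L^p$ bound via Lemma \ref{Lem 1}, and the condition $M > p'/2$ is exactly what makes $g(d(E,F)) = (1+2^k)^{-M}|2^k|^{d(1/p-1/2)}$ summable in the sense required by that lemma (the number of relevant cubes at scale $2^k$ grows like $2^{kd}$, and the exponent bookkeeping closes when $M$ beats $p'/2$).

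The third step is to assemble these pieces: split $L^p(\m^d)$ into a lattice of unit cubes, apply the per-block estimate on each cube, use that $e^{i\xi D}$ only moves mass a bounded-by-$\kappa|\xi|$ distance so that cubes far apart do not interact, and invoke Lemma \ref{Lem 1} to conclude $\|a_k(D)\|_{B(L^p)} \lesssim (1+2^k)^{-(M - d|1/p-1/2|)}\,\|(1+|\cdot|^2)^{\beta/2} a_k\|$ (schematically), then sum the geometric-type series in $k$ using $M > p'/2$ together with $\beta = d|1/p-1/2|+1$. By duality it suffices to treat $p \in (1,2]$, which is the range in which Lemma \ref{Lem3} and Lemma \ref{Lem 1} are stated, so the restriction causes no loss. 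The passage from the sum of $\|(1+|\cdot|^2)^{\beta/2} a_k\|$-type terms to the single expression $\sum_{j=0}^{M+2}\|\mathcal F\{(1+|\cdot|^2)^{\beta/2}(\tfrac{d}{d\cdot})^{(j)}a\}\|_\infty$ is a standard Littlewood--Paley estimate: each $\hat a_k$ is $\hat a$ times a bump at frequency $2^k$, so $(1+2^k)$-powers are absorbed by derivatives hitting $a$, and the Leibniz rule distributes derivatives between the weight $(1+|\cdot|^2)^{\beta/2}$ and $a$, producing the finitely many terms $j = 0, \dots, M+2$ (the $+2$ providing the slack needed for the $k$-sum to converge).

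The main obstacle I anticipate is the careful bookkeeping in the second and third steps: matching the order of smoothing $(1+D^2)^{\beta/2}$ provided by Lemma \ref{Lem3} against both the $L^p\to L^2$ loss $d(\tfrac1p-\tfrac12)$ and the decay needed for summability, while simultaneously tracking how the finite-propagation support constraint feeds into the hypothesis of Lemma \ref{Lem 1} (namely verifying that $s^{d|1/p-1/2|}\sum_{k\in\mathbb Z^d} g(\max(|k|-1,0)s)$ is finite, which forces the precise threshold $M > p'/2$). Getting the constants and exponents to line up so that the final series converges — rather than merely each term being finite — is where the argument genuinely has to be run, and is presumably the reason for the specific values $\beta = d|1/p-1/2|+1$ and the upper limit $M+2$ in the sum.
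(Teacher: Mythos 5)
Your proposal is correct in substance but organizes the argument differently from the paper. You share the three pillars --- Phillips calculus plus finite propagation speed for spatial localization, Lemma \ref{Lem3} for the $L^p\to L^2$ gain, Lemma \ref{Lem 1} to convert $L^p$--$L^2$ off-diagonal bounds into $L^p$ boundedness, and duality for $p>2$ --- but you manufacture the off-diagonal bounds via a Littlewood--Paley decomposition of the symbol: each piece $a_k(D)$ has kernel supported within distance $\simeq \kappa 2^{k}$ of the diagonal, so the \emph{global} $L^p\to L^2$ bound coming from Lemma \ref{Lem3} together with the spectral theorem immediately gives truncated off-diagonal estimates, and the rapid decay of the dyadic pieces of a $C_c^\infty$ symbol makes the sum over $k$ converge. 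The paper instead treats $b(D)(I+D^2)^{-k/2}$ as a single operator: it proves a global $L^q\to L^2$ bound and, separately, $L^2$--$L^2$ off-diagonal bounds of arbitrary polynomial order (from the decay of $\hat{b}_k$ on the range $|\xi|\geq d(E,F)/c$ singled out by finite propagation), and then interpolates (Riesz--Thorin) between the two to obtain $L^p$--$L^2$ off-diagonal decay of order $M\alpha$, letting $q\to1$ to cover all $p\in(1,2]$; that interpolation step is exactly what your decomposition replaces. Two caveats on your version. First, your schematic $g(d(E,F))=(1+2^k)^{-M}|2^k|^{d(1/p-1/2)}$ is not the right shape for Lemma \ref{Lem 1}: for the $k$-th piece, $g$ should be a constant $c_k$ (the $L^p\to L^2$ norm of $a_k(D)$) times the indicator of $d(E,F)\lesssim 2^k$, applied with cubes of side $2^k$, which yields $\|a_k(D)\|_{B(L^p)}\lesssim 2^{kd(\frac1p-\frac12)}c_k$. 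Second, your bookkeeping will naturally produce a derivative threshold governed by $d\big|\frac1p-\frac12\big|$ rather than $\frac{p'}{2}$, and control by quantities like $\|(1+|\cdot|^2)^{\beta/2}a^{(j)}\|_\infty$ rather than $\|\mathcal{F}\{(1+|\cdot|^2)^{\beta/2}a^{(j)}\}\|_\infty$; the former are recovered from the latter at the cost of two extra derivatives (via $\|g\|_\infty\leq\|\hat g\|_{L^1}\lesssim\|(1+|\cdot|)^2\hat g\|_\infty$), and since the stated right-hand side only increases when more terms are added, proving the bound with fewer derivatives still implies the theorem. Neither caveat is a gap, only bookkeeping you correctly flagged as needing to be run.
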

\begin{proof}
Let $q\in(1,2)$. To prove the $L^p$ boundedness (for all $p\in(1,\infty)$) of the operator $a(D)$ for all $a\in C_c^\infty(\m)$ we shall start with the operator $b(D)(I+D^2)^{-\frac{k}{2}}$ for $b\in C_c^\infty(\m)$, where $k=d(\frac{1}{q}-\frac{1}{2})$.

Now for $u\in L^q(\m^d)$, consider $||b(D)(I+D^2)^{-\frac{k}{2}}u||_2$
\begin{align*}
    &=\Big|\Big|\int_\m\hat{b}(\xi)\exp(i\xi D)(I+D^2)^{-\frac{k}{2}}ud\xi\Big|\Big|_2, \text{ (by Phillips calculus) }\\
    &\lesssim\int_\m|\hat{b}(\xi)|\;||\exp(i\xi D)(I+D^2)^{-\frac{k}{2}}u||_2d\xi\\
    &\lesssim\int_\m|\hat{b}(\xi)|\;||(I+D^2)^{-\frac{k}{2}}u||_2d\xi\;\text{ (by uniform boundedness of }\exp(i\xi D) \text{ on }L^2)\\
    &\lesssim||u||_{q}||\hat{b}||_{L^1}\text{ (by Lemma \ref{Lem3})}\\
    &\lesssim||u||_{q}||(1+|\cdot|)^2\hat{b}||_\infty\\
    &\lesssim||u||_{q}\sum_{j=0}^2\bigg|\bigg|\mathcal{F}\bigg\{\bigg(\frac{d}{d(\cdot)}\bigg)^{(j)}b(\cdot)\bigg\}\bigg|\bigg|_\infty\\
    &\lesssim||u||_q\sum_{j=0}^2\bigg|\bigg|\mathcal{F}\bigg\{\bigg(\frac{d}{d(\cdot)}\bigg)^{(j)}\tilde{b}(\cdot)\bigg\}\bigg|\bigg|_\infty \text{ (where }\tilde{b}(x)=b(x)(1+x^2)^{1/2}).
\end{align*}
The reason behind writing in terms of $\tilde{b}(x)$ can be seen later.
We shall now see that the operator $b(D)(I+D^2)^{-\frac{k}{2}}$ is $L^2-L^2$ off diagonal bounded for any $k>0$, in particular for $k$ as above.

Let $E$ and $F$ be two Borel subsets of $\m^d$ and $u\in L^2(\m^d)$. Consider
\begin{align*}
   ||1_Eb(D)(I+D^2)^{-\frac{k}{2}}1_Fu||_2=||1_Eb_k(D)1_Fu||_2
\end{align*}
where $b_k(D)=b(D)(I+D^2)^{-\frac{k}{2}}$. We will now obtain the bound using the Phillips calculus.
Since the group generated by $iD$ has finite speed of propagation, therefore for arbitrary Borel set $E,F\subseteq\m^d$ there exists $c>0$ (independent of $E,F$ and $\xi$) such that 
\begin{equation*}
    1_E\exp(i\xi D)1_F=0, \text{ if } \frac{d(E,F)}{|\xi|}\geq c.\\
\end{equation*}
\noindent
For any closed cubes $E,F\subseteq\m^d,$ and $u\in L^2(\m^d;\C^n)$ such that $u$ is supported in $F$ consider $||1_Eb_k(D)1_Fu||_2$
\begin{align*}
    \hspace{1cm}&=\bigg|\bigg|\int_\m\hat{b}_k(\xi)1_E\exp(i\xi D)1_Fu d\xi\bigg|\bigg|_2\\
    &\lesssim\int_{-\infty}^{-\frac{d(E,F)}{c}}|\hat{b}_k(\xi)|d\xi ||1_Fu||_2+\int_{\frac{d(E,F)}{c}}^\infty|\hat{b}_k(\xi)|d\xi ||1_Fu||_2\\
    &\lesssim\int_{\frac{d(E,F)}{c}}^\infty|(1+c|\xi|)^{M}\hat{b}_k(\xi)|\frac{d\xi}{(1+c|\xi|)^{M}}||1_Fu||_2,\text{ for all }M>1\\
    &\lesssim\big|\big|(1+c|\cdot|)^{M}\hat{b}_k\big|\big|_{\infty}\int_{\frac{d(E,F)}{c}}^\infty\frac{d\xi}{(1+c|\xi|)^{M}} ||1_Fu||_2\\
    &\lesssim\big|\big|(1+c|\cdot|)^{M}\hat{b}_k\big|\big|_{\infty}(1+d(E,F))^{-M+1}||1_Fu||_2\\
    &\lesssim\sum_{j=0}^{M}\bigg|\bigg|\mathcal{F}\bigg\{\Big(\frac{d}{d(\cdot)}\Big)^{(j)}b_k(\cdot)\bigg\}\bigg|\bigg|_\infty(1+d(E,F))^{-M+1}||1_Fu||_2\\
    &\lesssim\sum_{j=0}^{M}\bigg|\bigg|\mathcal{F}\bigg\{\Big(\frac{d}{d(\cdot)}\Big)^{(j)}\bigg[\tilde{b}(\cdot)(1+(\cdot)^2)^{-(\frac{k+1}{2})}\bigg]\bigg\}\bigg|\bigg|_\infty (1+d(E,F))^{-M+1}||1_Fu||_2,\\
    &\simeq\sum_{j=0}^{M}\bigg|\bigg|\mathcal{F}\bigg\{\Big(\frac{d}{d(\cdot)}\Big)^{(j)}\tilde{b}(\cdot)\bigg(\sum_{l=0}^{M-j}\bigg(\frac{d}{d(\cdot)}\bigg)^{(l)}(1+(\cdot)^2)^{-(\frac{k+1}{2})}\bigg)\bigg\}\bigg|\bigg|_\infty\\
    &\hspace{1.5cm}(1+d(E,F))^{-M+1}||1_Fu||_2\\
    &\lesssim\sum_{j=0}^{M}\sum_{l=0}^{M-j}\bigg|\bigg|\mathcal{F}\bigg\{\Big(\frac{d}{d(\cdot)}\Big)^{(j)}\tilde{b}(\cdot)\bigg\}*\mathcal{F}\bigg\{ \bigg(\frac{d}{d(\cdot)}\bigg)^{(l)}g_k\bigg\}\bigg|\bigg|_\infty(1+d(E,F))^{-M+1}||1_Fu||_2\\
     &\;\;\;\;\text{ where }g_k(x)=(1+x^2)^{-(\frac{k+1}{2})}\\
    &\lesssim\sum_{j=0}^{M}\sum_{l=0}^{M-j}\bigg|\bigg|\mathcal{F}\bigg\{\Big(\frac{d}{d(\cdot)}\Big)^{(j)}\tilde{b}(\cdot)\bigg\}\bigg|\bigg|_\infty\bigg|\bigg|\mathcal{F}\bigg\{ \bigg(\frac{d}{d(\cdot)}\bigg)^{(l)}g_k\bigg\}\bigg|\bigg|_{L^1}(1+d(E,F))^{-M+1}||1_Fu||_2.
\end{align*}
\noindent
Thus,
\begin{align*}
    \big|\big|1_Eb(D)(I+D^2)^{-\frac{k}{2}}1_Fu\big|\big|_2\lesssim&\sum_{j=0}^{M}\Big|\Big|\mathcal{F}\Big\{\Big(\frac{d}{d(\cdot)}\Big)^{(j)}\tilde{b}(\cdot)\Big\}\Big|\Big|_\infty(1+d(E,F))^{-M+1}||1_Fu||_2,\; \forall M>1
\end{align*}
where, by simple computation, for all $0\leq l\leq M$  we have
\begin{align*}
\bigg|\bigg|\mathcal{F}\bigg\{\bigg(\frac{d}{d(\cdot)}\bigg)^{(l)}g_k\bigg\}\bigg|\bigg|_{L^1} &=\bigg|\bigg|(1+|\cdot|)^{-1}(1+|\cdot|)\mathcal{F}\bigg\{\bigg(\frac{d}{d(\cdot)}\bigg)^{(l)}g_k\bigg\}\bigg|\bigg|_{L^1}\\
    &\leq||(1+|\cdot|)^{-1}||_2\bigg|\bigg|(1+|\cdot|)\mathcal{F}\bigg\{\bigg(\frac{d}{d(\cdot)}\bigg)^{(l)}g_k\bigg\}\bigg|\bigg|_2\\
    &\lesssim\bigg|\bigg|\bigg(\frac{d}{d(\cdot)}\bigg)^{(l)}g_k\bigg|\bigg|_2+\bigg|\bigg|\bigg(\frac{d}{d(\cdot)}\bigg)^{(l+1)}g_k\bigg|\bigg|_2<\infty.
\end{align*}
By applying Riesz-Thorin Interpolation theorem (Theorem 2.1, \cite{SS}) on the operator $b(D)(I+D^2)^{-\frac{k}{2}}$ we obtain the following off- diagonal bound from $L^p$ to $L^2$
\begin{align*}
    ||1_Eb(D)(I+D^2)^{-\frac{k}{2}}1_Fu||_2\lesssim&\sum_{j=0}^{M}\bigg|\bigg|\mathcal{F}\bigg\{\Big(\frac{d}{d(\cdot)}\Big)^{(j)}\tilde{b}(\cdot)\bigg\}\bigg|\bigg|_\infty\\
    &\;\;\;(1+d(E,F))^{-(M-1)\a}||1_Fu||_p
\end{align*}
for $p\in[q,2]$ and all $M>1+\frac{d}{\a}$, where $\a$ satisfies $\frac{1}{p}=\frac{1-\a}{q}+\frac{\a}{2}$ or  $\a=(\frac{1}{q}-\frac{1}{2})^{-1}\big(\frac{1}{q}-\frac{1}{p}\big)$. Since $q\in(1,2)$ is any arbitrary number so in the limiting case when $q$ approaches 1, we can say that the operator $b(D)(I+D^2)^{-\frac{k}{2}}$ has $L^p-L^2$ off-diagonal bound of order $>1$ if $M>1+d\cdot\frac{p'}{2}$ for $p\in(1,2]$ where $\frac{1}{p}+\frac{1}{p'}=1$.\\

\noindent
Let $\displaystyle a(x)=b(x)(1+x^2)^{-\frac{k}{2}}$ then 
\begin{align*}
    \big|\big|1_Ea(D)1_Fu\big|\big|_2&=\sum_{j=0}^{M}\bigg|\bigg|\mathcal{F}\bigg\{\Big(\frac{d}{d(\cdot)}\Big)^{(j)}\bigg[a(\cdot)(1+(\cdot)^2)^{\frac{k+1}{2}}\bigg]\bigg\}\bigg|\bigg|_\infty(1+d(E,F))^{-(M-1)\a}||1_Fu||_p\\
    &\lesssim\sum_{j=0}^{M}\sum_{l=0}^{M-j}\bigg|\bigg|\mathcal{F}\bigg\{(1+|\cdot|^2)^{\frac{(k+1)}{2}}\bigg[\Big(\frac{d}{d(\cdot)}\Big)^{(j)}a(\cdot)\bigg]\bigg[\frac{P_l(\cdot)}{(1+(\cdot)^2)^l}\bigg]\bigg\}\bigg|\bigg|_\infty\\ &\;\;\;\;\;(1+d(E,F))^{-(M-1)\a}||1_Fu||_p\text{ (where }P_l(x)\text{ is a polynomial of degree}< 2l,\\
    & \text{ and is obtained by taking } l^{th} \text{ derivative of } (1+x^2)^{\frac{k+1}{2}})\\
    &\lesssim\sum_{j=0}^{M}\sum_{l=0}^{M-j}\bigg|\bigg|\mathcal{F}\bigg\{(1+|\cdot|^2)^{\frac{k+1}{2}}\Big(\frac{d}{d(\cdot)}\Big)^{(j)}a(\cdot)\bigg\}*\mathcal{F}\bigg\{\frac{P_l(\cdot)}{(1+(\cdot)^2)^l}\bigg\}\bigg|\bigg|_\infty\\
    &\;\;\;\;\;(1+d(E,F))^{-(M-1)\a}||1_Fu||_p\\
    &\lesssim\sum_{j=0}^{M}\sum_{l=0}^{M-j}\bigg|\bigg|\mathcal{F}\Big\{(1+|\cdot|^2)^{\frac{\beta}{2}}\Big(\frac{d}{d(\cdot)}\Big)^{(j)}a(\cdot)\Big\}\bigg|\bigg|_\infty\bigg|\bigg|\mathcal{F}\bigg\{\frac{P_l(\cdot)}{(1+(\cdot)^2)^l}\bigg\}\bigg|\bigg|_{L^1}\\
    &\;\;\;\;\;(1+d(E,F))^{-(M-1)\a}||1_Fu||_p\\
    &\lesssim\sum_{j=0}^{M}\bigg|\bigg|\mathcal{F}\Big\{(1+|\cdot|^2)^{\frac{\beta}{2}}\Big(\frac{d}{d(\cdot)}\Big)^{(j)}a(\cdot)\Big\}\bigg|\bigg|_\infty(1+d(E,F))^{-(M-1)\a}||1_Fu||_p
\end{align*} 
where $\beta=k+1=d\big|\frac{1}{p}-\frac{1}{2}\big|+1$. The $L^1$ norm in the second last inequality is finite and can be computed in the same way as done for the functions $g_k$.
Thus, applying Lemma \ref{Lem 1} we obtain that for all $p\in(1,2]$
\begin{equation*}
    ||a(D)||_{B(L^p)}\lesssim\sum_{j=0}^{M}\bigg|\bigg|\mathcal{F}\bigg\{(1+|\cdot|^2)^{\frac{\beta}{2}}\bigg(\frac{d}{d(\cdot)}\bigg)^{(j)}a(\cdot)\bigg\}\bigg|\bigg|_\infty.
\end{equation*}
The result for $p\in[2,\infty)$ follows by duality.
\end{proof}


It can be seen from the following remark that we could obtain similar functional calculus through a simple argument if we knew that the group we have is polynomially bounded.
\begin{remark}
Let $X$ be an arbitrary Banach space and $D$ is an operator acting on $X$. If the group generated by $iD$ is polynomially bounded, say by a polynomial of order $M>0$, then a bound on the Phillips calculus of the operator $D$ is given by
\begin{align*}
    ||a(D)||_{B(X)}&=\bigg|\bigg|\int_\m\hat{a}(\xi)\exp(i\xi D)d\xi\bigg|\bigg|\\
    &=\bigg|\bigg|\int_\m(1+|\xi|)^M\hat{a}(\xi)\frac{\exp(i\xi D)}{(1+|\xi|)^M}d\xi\bigg|\bigg|\\
    &\lesssim||(1+|\cdot|)^M\hat{a}||_{L^1}\bigg|\bigg|\xi\mapsto\frac{\exp(i\xi D)}{(1+|\xi|)^M}\bigg|\bigg|_{L^\infty(B(X))}\\
    &\lesssim\sum_{j=0}^{M+2}\bigg|\bigg|\mathcal{F}\bigg\{\bigg(\frac{d}{d(\cdot)}\bigg)^{(j)}a(\cdot)\bigg\}\bigg|\bigg|_\infty
\end{align*}
\end{remark}
On $H_D^p$ instead of $L^p$, we now obtain a better bound without Sobolev embedding assumption.

\begin{theorem}\label{Thm3.2}
Let $1<p<\infty$ and let $D$ be a self-adjoint operator on $L^2(\m^d;\C^n)$. Suppose also that $(e^{i\xi D})_{\xi\in\m}$ has finite speed of propagation. Then for all $a\in C_c^\infty(\m)$ and $M>\frac{d}{\min(p,2)}$
\begin{equation}
    ||a(D)||_{B(H^p_D)}\lesssim\sum_{j=0}^{M+1}\bigg|\bigg|\o\mapsto|\o|^j\Big(\frac{d}{d\o}\Big)^{(j)}a(\o)\bigg|\bigg|_\infty.
\end{equation}
\end{theorem}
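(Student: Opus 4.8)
The plan is to decompose $a$ dyadically in the spectral variable, estimate each dyadic piece by combining the Phillips calculus with finite speed of propagation, and then reassemble the pieces inside the tent space $T^{p,2}$ that defines $H^p_D$. Fix a non-degenerate $\psi\in\Psi(\S^0_\mu)$ whose restriction to $\m$ has Schwartz Fourier transform — for instance $\psi(z)=z^Ne^{-z^2}$ with $N$ large — so that by Theorem \ref{Thm2.1} one has $\|u\|_{H^p_D}\simeq\|\Q_\psi u\|_{T^{p,2}}$, and it suffices to bound $\|\Q_\psi(a(D)u)\|_{T^{p,2}}$ for $u\in\overline{\R_2(D)}$. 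Choose $\o\in C_c^\infty$ supported in $\{\tfrac12\le|z|\le 2\}$ with $\sum_{k\in\mathbb Z}\o(2^{-k}z)=1$ on $\m^*$ and put $m_k:=\o(2^{-k}\cdot)\,a$, so that $a=\sum_k m_k$ on $\m^*$, each $m_k$ is supported in $\{2^{k-1}\le|z|\le 2^{k+1}\}$, and the Leibniz rule gives $\|m_k^{(j)}\|_\infty\lesssim 2^{-jk}A_j$ for $0\le j\le M+1$, where $A_j:=\sum_{l=0}^{j}\|(1+|\cdot|)^la^{(l)}\|_\infty$; in particular $\|m_k^{(M+1)}\|_{L^1}\lesssim 2^{-Mk}A_{M+1}$.

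The engine of the proof is an $L^2$--$L^2$ off-diagonal bound for each $m_k(D)$ at the scale $2^{-k}$. Writing $m_k(D)=\int_\m\widehat{m_k}(\xi)\,e^{i\xi D}\,d\xi$ via the Phillips calculus and using that finite speed of propagation forces $1_E e^{i\xi D}1_F=0$ whenever $d(E,F)\ge c|\xi|$, one obtains, for closed cubes $E,F$ and $w$ supported in $F$,
\begin{equation*}
  \bigl\|1_E m_k(D)1_F w\bigr\|_2\le\int_{|\xi|\ge d(E,F)/c}|\widehat{m_k}(\xi)|\,d\xi\;\|w\|_2\lesssim A_{M+1}\bigl(1+2^kd(E,F)\bigr)^{-M}\|w\|_2,
\end{equation*}
using $\int_{|\xi|\ge R}|\widehat{m_k}|\le R^{-M}\|m_k^{(M+1)}\|_{L^1}$ when $d(E,F)\gtrsim 2^{-k}$ and the trivial bound $\|m_k(D)\|_{2\to2}\le\|m_k\|_\infty\le A_0$ when $d(E,F)\lesssim 2^{-k}$. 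Running the same argument with $\psi$ in place of $m_k$, using the Schwartz decay of $\widehat\psi$, shows that $\{\psi(sD)\}_{s>0}$ has off-diagonal bounds of every order at scale $s$; composing, the operators $V_s:=m_k(D)\,\tilde\psi(sD)$ have off-diagonal bounds of order $M$ at scale $\max(s,2^{-k})$ with constant $\lesssim A_{M+1}\min\bigl((s2^k)^N,(s2^k)^{-N}\bigr)$, and each $V_s$ maps into $\overline{\R_2(D)}$.

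Now one feeds this into the Hardy-space machinery. By the Calderón reproducing formula $u=c\int_0^\infty\tilde\psi(sD)\psi(sD)u\,\tfrac{ds}{s}$ on $\overline{\R_2(D)}$,
\begin{equation*}
  m_k(D)u=c\int_0^\infty V_s\bigl[(\Q_\psi u)(\cdot,s)\bigr]\,\tfrac{ds}{s},
\end{equation*}
so the off-diagonal estimates for $\{V_s\}$ (of order $M>d/\min(p,2)$, exactly the hypothesis of Lemma \ref{Lem 2}), together with the $T^{p,2}$-boundedness results of \cite{FMP} that underlie Theorem \ref{Thm2.1} and Lemma \ref{Lem 2}, yield $\|m_k(D)u\|_{H^p_D}\lesssim A_{M+1}\|\Q_\psi u\|_{T^{p,2}}\simeq A_{M+1}\|u\|_{H^p_D}$ uniformly in $k$. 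Since $\|\psi(tD)m_k(D)\|_{2\to2}\lesssim\min\bigl((t2^k)^N,(t2^k)^{-N}\bigr)$, the function $\Q_\psi(m_k(D)u)$ is concentrated in $t\sim 2^{-k}$, so the summands $\Q_\psi(m_k(D)u)$ are almost orthogonal in $T^{p,2}$ and one concludes $\|\Q_\psi(a(D)u)\|_{T^{p,2}}=\bigl\|\sum_k\Q_\psi(m_k(D)u)\bigr\|_{T^{p,2}}\lesssim A_{M+1}\|u\|_{H^p_D}$; the part of $a$ near the origin (present only when $0\in\operatorname{supp}a$) is absorbed by the bounded $H^\infty(\S_\mu)$-calculus of $D$ on $H^p_D$ from Theorem \ref{Thm2.1}. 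This gives the claimed estimate.

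The hard part is the two-scale tent-space analysis of the third paragraph: upgrading the single-scale off-diagonal bound of $m_k(D)$ (equivalently, of $\{V_s\}$) to a bound on $H^p_D$ uniform in $k$, which requires combining Lemma \ref{Lem 2} with the Calderón formula and controlling the mismatch between the scale $2^{-k}$ and the integration variables $s,t$ when the latter are much smaller than $2^{-k}$ — this is where the strong vanishing of $\psi$ at the origin is used — and then summing the dyadic pieces so that the final constant does not depend on $\operatorname{supp}a$. A secondary but essential point is that it is finite speed of propagation of the group $(e^{i\xi D})_{\xi\in\m}$, not merely a bounded $L^2$ holomorphic calculus, that makes the off-diagonal bounds for the $\psi(sD)$ available in the first place.
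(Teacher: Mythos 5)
Your overall strategy (dyadic decomposition in the spectral variable, off-diagonal bounds from finite propagation speed via the Phillips calculus, reassembly in $T^{p,2}$) is a legitimate route, and your Phillips-calculus computation giving $\|1_Em_k(D)1_F\|_{2\to2}\lesssim A_{M+1}(1+2^kd(E,F))^{-M}$ is essentially the same computation as in the paper. But there are two genuine gaps. First, the low-frequency part cannot be ``absorbed by the bounded $H^\infty(\S^0_\mu)$-calculus'': the low-frequency piece is $a\cdot\chi$ with $\chi$ a bump at the origin, hence again a nontrivial element of $C_c^\infty(\m)$, and a compactly supported function on $\m$ is never the restriction of a nonzero holomorphic function on a bisector (identity theorem), so $a\chi\notin H^\infty(\S^0_\mu)$ and Theorem \ref{Thm2.1} does not apply to it. Second, and more seriously, the step you yourself flag as ``the hard part'' --- upgrading the off-diagonal bounds for $V_s=m_k(D)\tilde\psi(sD)$ at scale $\max(s,2^{-k})$ to an $H^p_D$ bound uniform in $k$, and then summing the pieces $\Q_\psi(m_k(D)u)$ --- is asserted rather than proved. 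Lemma \ref{Lem 2} requires off-diagonal bounds at the scale matching the tent-space variable, so it does not apply to $\{V_s\}_s$ when $s\ll 2^{-k}$; you would need a two-parameter Schur-type estimate on $(t,s)\mapsto\psi(tD)m_k(D)\tilde\psi(sD)$, and ``almost orthogonality in $T^{p,2}$'' is not an available tool for $p\neq2$ (no Cotlar--Stein; one must sum with explicit geometric gains). As written, the core of the argument is missing.

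The paper avoids all of this by not decomposing $a$ at all. Using the equivalence $\|\Q_\psi v\|_{T^{p,2}}\simeq\|\Q_{\psi^2}v\|_{T^{p,2}}$ from Theorem \ref{Thm2.1}, it bounds $\|a(D)u\|_{H^p_D}$ by $\|(x,t)\mapsto a(D)\psi(tD)\,\psi(tD)u(x)\|_{T^{p,2}}$ and applies Lemma \ref{Lem 2} once to the single-scale family $U_t=(a\psi_t)(D)$: the factor $\psi_t$ already provides the localization at scale $t$, since the Phillips-calculus and Leibniz computation gives $\|(a\psi_t)^{(M+1)}\|_{L^1}\lesssim t^{M}\sum_{j=0}^{M+1}\|(1+|\cdot|)^{j}a^{(j)}\|_\infty$, i.e.\ off-diagonal bounds of order $M$ at scale $t$ with exactly the constant in the statement. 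Your multi-scale approach could probably be completed, but the single-scale reduction is both much shorter and yields the stated constant directly; I recommend adopting it.
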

\begin{proof}
Let $u\in L^2(\m^d,\mathbb{C}^n)$. Fix $M=d$ and for $p\in(1,\infty)$ let $\psi\in\Psi_{d+1}^{d+1}(\S_\mu^0)$ where $\mu\in(0,\pi/2)$.
Consider $||a(D)u||_{H^p_D}$
\begin{align}
    &=||\Q_\psi(a(D)u)||_{T^{p,2}}\;(\text{by definition }[\ref{Def1}])\nonumber\\
    &=||(x,t)\mapsto\psi(tD)a(D)u(x)||_{T^{p,2}}\nonumber\\
    &\lesssim||(x,t)\mapsto a(D)\psi(tD)\psi(tD)u(x)||_{T^{p,2}}\label{A}\\
    &\lesssim C(a)||(x,t)\mapsto\psi(tD)u(x)||_{T^{p,2}}\label{B}
\end{align}
where $C(a)$(to be determined) is a constant that depends on $a$ and the inequality \eqref{B} holds by Lemma \ref{Lem 2}. For inequality \eqref{A}, we see by Theorem \ref{Thm2.1} and its proof (Theorem 7.10, \cite{MR2448989}) that we can replace $\psi$ by $\psi^2$ as they belong to the same class. To obtain the constant $C(a)$ we shall see that the operator $(a\psi_t)(D)$ has $L^2-L^2$ off diagonal bound of order $M>\frac{d}{\min\{p,2\}}$ where $\psi_t(D)=\psi(tD)$. Since $(e^{i\xi D})_{\xi\in\m}$ has finite propagation speed therefore for arbitrary Borel sets $E,F\subseteq\m^d$ there exists a $c>0$ such that $1_E\exp(i\xi D)1_F=0,\;\text{if }\frac{d(E,F)}{|\xi|}\geq c$. Let $E$ and $F$ be two Borel subsets of $\m^d$ and $M=d$. Consider $||1_E(a\psi_t)(D)1_Fu||_2$ 
\begin{align*}
    &=\bigg|\bigg|\int_\m\widehat{a\psi_t}(\xi)1_E\exp(i\xi D)1_Fud\xi\bigg|\bigg|_2\text{ (by Phillips calculus)}\\
    &\lesssim||\xi\mapsto(c\xi)^{M+1}\widehat{a\psi_t}(\xi)||_\infty\int_{\frac{d(E,F)}{c}}^\infty\frac{d\xi}{(c\xi)^{M+1}}||1_Fu||_2.
\end{align*}
Now consider $||\xi\mapsto(c\xi)^{M+1}\widehat{a\psi_t}(\xi)||_\infty$
\begin{align*}
    &\lesssim\Big|\Big|\o\mapsto\frac{d^{M+1}}{d\o^{M+1}}a\psi_t(\o)\Big|\Big|_{L^1}\\
    &\lesssim\sum_{j=0}^{M+1}\Big|\Big|\o\mapsto|\o|^j \Big(\frac{d}{d\o}\Big)^{(j)}a(\o)\Big|\Big|_\infty \int_\m\Big|t^{M+1-j}\psi^{(M+1-j)}(t\o)\frac{d\o}{|\o|^j}\Big|\\
    &\lesssim\sum_{j=0}^{M+1}\Big|\Big|\o\mapsto|\o|^j \Big(\frac{d}{d\o}\Big)^{(j)}a(\o)\Big|\Big|_\infty |t|^M\int_\m\Big|\psi^{(M+1-j)}(t\o)\frac{td\o}{(t|\o|))^j}\Big|\\
    &\lesssim\sum_{j=0}^{M+1}\Big|\Big|\o\mapsto|\o|^j \Big(\frac{d}{d\o}\Big)^{(j)}a(\o)\Big|\Big|_\infty |t|^M\int_\m\Big|\psi^{(M+1-j)}(\o)\frac{d\o}{(t|\o|)^j}\Big|\\
    &\hspace{7cm}\text{(by change of variables)}\\
    &\lesssim t^M\sum_{j=0}^{M+1}\Big|\Big|\o\mapsto|\o|^j \Big(\frac{d}{d\o}\Big)^{(j)}a(\o)\Big|\Big|_\infty, \text{ since }t\in\m^+
\end{align*}
where the integral $\displaystyle\int_\m\Big|\psi^{(M+1-j)}(\o)\frac{d\o}{(t|\o|)^j}\Big|$ is finite since for $\psi\in \Psi_{d+1}^{d+1}(\S^0_{\mu})$ the derivatives $\psi^{(j)}$ lie in the class $\Psi_{d+1-j}^{d+1}(\S^0_{\mu})$ for $j=0,1,...,M+1$ or we can say that $\eta^j\psi^{(j)}(\eta)\in\Psi_{d+1}^{d+1}(\S^0_{\mu})$ and hence by simple computation we obtain that the integral is uniformly bounded in $t$.
Thus,\\ $\displaystyle||1_E(a\psi_t)(D)1_Fu||_2\lesssim\Big(\frac{d(E,F)}{t}\Big)^{-M}\sum_{j=0}^{M+1}\Big|\Big|\o\mapsto|\o|^j \Big(\frac{d}{d\o}\Big)^{(j)}a(\o)\Big|\Big|_\infty||1_Fu||_2$.\\
Now to get the off diagonal bound consider two cases
\begin{itemize}
    \item[1.] If $\frac{d(E,F)}{t}\geq c\geq1$ then by some algebraic computation we can see that\\
                   $\Big(\frac{d(E,F)}{t}\Big)^{-M}\lesssim\Big(1+\frac{d(E,F)}{t}\Big)^{-M}.$
    \item[2.] If $1>\frac{d(E,F)}{t}\geq c$ then $||1_E(a\psi_t)(D)1_F||_{B(L^2)}$ is uniformly bounded by $||a||_\infty$.
\end{itemize}
Thus $(a\psi_t)(D)$ has $L^2-L^2$ off diagonal bound and hence, 
\begin{align*}
||a(D)u||_{H^p_D}
    &\lesssim\sum_{j=0}^{M+1}\Big|\Big|\o\mapsto|\o|^j \Big(\frac{d}{d\o}\Big)^{(j)}a(\o)\Big|\Big|_\infty||(x,t)\mapsto\psi(tD)u(x)||_{T^{p,2}}\\
    &\lesssim\sum_{j=0}^{M+1}\Big|\Big|\o\mapsto|\o|^j \Big(\frac{d}{d\o}\Big)^{(j)}a(\o)\Big|\Big|_\infty||\Q_\psi u||_{T^{p,2}}\\
    &\simeq\sum_{j=0}^{M+1}\Big|\Big|\o\mapsto|\o|^j \Big(\frac{d}{d\o}\Big)^{(j)}a(\o)\Big|\Big|_\infty||u||_{H^p_D}
\end{align*}
where the last equivalence holds by definition.
\end{proof}

The following Corollary is a specific case of the above Theorem for the operator $\Pi_B$ given in Definition \ref{Def8}.
\begin{corollary}\label{Cor1}
Let $p\in(p_H,p^H)$ as given in definition \ref{Def8}. If $D=\Pi_B$ in the above theorem is such that it is self-adjoint then for all $a\in C_c^\infty(\m)$ we have
\begin{equation}
    ||a(D)||_{B(L^p)}\lesssim\sum_{j=0}^{M+1}\Big|\Big|\o\mapsto|\o|^j \Big(\frac{d}{d\o}\Big)^{(j)}a(\o)\Big|\Big|_\infty.
\end{equation}
\end{corollary}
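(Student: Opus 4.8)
The plan is to deduce the $L^p$-bound from Theorem \ref{Thm3.2}, applied with $D=\Pi_B$, by passing from $H^p_{\Pi_B}$ to $L^p$ through the $L^p$ Hodge decomposition together with the identification (recalled in Section 2, from \cite{FMP}) of $H^p_{\Pi_B}$ with $\overline{\mathcal{R}_p(\Pi_B)}$. The first thing I would check is that Theorem \ref{Thm3.2} is in fact available for $\Pi_B$: although it is stated for a self-adjoint operator, its proof only uses that $e^{i\xi D}$ is a bounded $C_0$-group on $L^2$ with finite speed of propagation, together with the Hardy-space machinery of Theorem \ref{Thm2.1} (bisectoriality, bounded $H^\infty$ calculus on $L^2$, and $L^2$--$L^2$ off-diagonal bounds of order $>d/2$ for $(I+i\xi D)^{-1}$). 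For $\Pi_B$ the latter three are supplied by \cite{AKM,FMP}, and finite speed of propagation holds because $\Pi_B$ is a first order differential operator with bounded measurable coefficients, the speed being controlled by the symbol of $\G$ and $||B_1||_\infty||B_2||_\infty$.

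Granting that, Theorem \ref{Thm3.2} yields, for every $a\in C_c^\infty(\m)$ and $M>d/\min(p,2)$,
\begin{equation*}
    ||a(\Pi_B)||_{B(H^p_{\Pi_B})}\lesssim\sum_{j=0}^{M+1}\Big|\Big|x\mapsto(1+|x|)^j\Big(\frac{d}{dx}\Big)^{(j)}a(x)\Big|\Big|_\infty.
\end{equation*}
I would then restrict to $p\in(p_H,p^H)$, where $\Pi_B$ Hodge decomposes $L^p(\m^d;\C^n)=\mathcal{N}_p(\Pi_B)\oplus\overline{\mathcal{R}_p(\Pi_B)}$ with bounded Hodge projections, and split $a(\Pi_B)$ accordingly on a dense subspace such as $L^2\cap L^p$. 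Since $e^{i\xi\Pi_B}$ acts as the identity on $\mathcal{N}_p(\Pi_B)$, the Phillips formula gives $a(\Pi_B)=a(0)I$ there, with operator norm $|a(0)|\leq||a||_\infty$, which is the $j=0$ term above. On $\overline{\mathcal{R}_p(\Pi_B)}=H^p_{\Pi_B}$, where the two norms are equivalent by \cite{FMP}, the displayed estimate applies. Adding the two contributions and using boundedness of the Hodge projections on $L^p$ gives the claimed bound on $||a(\Pi_B)||_{B(L^p)}$, which then extends to all of $L^p$ by density.

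The last paragraph is essentially bookkeeping. The step I expect to be the real obstacle — and the one I would argue most carefully — is showing that $i\Pi_B$ generates a bounded $C_0$-group on $L^2$ with finite propagation speed, so that the Phillips-calculus identity $a(\Pi_B)=\int_\m\hat a(\xi)e^{i\xi\Pi_B}\,d\xi$ underlying Theorem \ref{Thm3.2} is legitimate; here one cannot invoke self-adjointness, since $\Pi_B$ is only bisectorial, and must instead exploit the accretivity hypotheses on $B_1,B_2$, which play the role of a symmetriser for the associated first order system. One should also keep track of the restriction $p\in(p_H,p^H)$, outside of which neither the Hardy-space identification nor the $L^p$ Hodge decomposition is available.
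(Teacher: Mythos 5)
Your argument follows the same route as the paper: Theorem \ref{Thm3.2} applied with $D=\Pi_B$, combined with the identification $H^p_{\Pi_B}=\overline{\mathcal{R}_p(\Pi_B)}$ from \cite{FMP} and the Hodge decomposition $L^p=\mathcal{N}_p(\Pi_B)\oplus\overline{\mathcal{R}_p(\Pi_B)}$ for $p\in(p_H,p^H)$. The paper's own proof consists of exactly this two-line identification and does not address the two points you rightly flag --- the action of $a(\Pi_B)$ on the null-space component and the applicability of Theorem \ref{Thm3.2} to the merely bisectorial, non-self-adjoint $\Pi_B$ (group generation and finite propagation speed on $L^2$) --- so your version is, if anything, more complete than the one in the paper.
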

\begin{proof}
Using Corollary 3.2, \cite{FMP} we obtain that $H^P_{\Pi_B}=\overline{\R_p(\Pi_B)}$ for $p\in(p_H,p^H)$ and since $\Pi_B$ is bisectorial in $L^p(\m^d,\mathbb{C}^n)$ for $p\in(1,\infty)$ we know that $L^p(\m^d,\mathbb{C}^n)=\mathcal{N}_p(\Pi_B)\oplus\overline{\R_p(\Pi_B)}$. Thus for all $p\in(p_H,p^H),\;\mathcal{N}_p(\Pi_B)\oplus H^p_{\Pi_B}= L^p(\m^d,\mathbb{C}^n)$.
\end{proof}

Finally, we show that for operators that generate group with finite-speed of propagation on $L^2$, Kriegler-Weis result (Theorem \ref{Thm2.3}) on H{\"o}rmander calculus can be improved to require only boundedness (rather than R-boundedness) assumption.

\begin{theorem}\label{Thm3.3}
Let $D$ be a self-adjoint operator on $L^2(\m^d;\C^n)$ such that the group $(e^{itD})_{t\in\m}$ has finite speed of propagation. Suppose also that the following set is $L^{q_*}-L^q$ bounded for $q\in(1,2]$ and $\a>\frac{1}{2}$
\begin{equation*}
    (I)_\a:=\bigg\{\bigg(\frac{\pi}{2}-|\t|\bigg)^\alpha \sqrt{t}\exp(-te^{i\t} D^2):t>0,\t\in\bigg(\frac{-\pi}{2},\frac{\pi}{2}\bigg)\bigg\}.
\end{equation*}
Then the set
\begin{equation*}
    (S)_\alpha:=\bigg\{\bigg(\frac{\pi}{2}-|\t|\bigg)^\alpha \exp(-te^{i\t} D^2):t>0,\t\in\bigg(\frac{-\pi}{2},\frac{\pi}{2}\bigg)\bigg\}
\end{equation*}
is R-bounded for some $\a>0$(need not be same as above) and the operator $D^2$ has a R-bounded H{\"o}rmander calculus $\mathcal{H}^\gamma_2$ for $\gamma>\a+\frac{1}{2}$ on $L^q(\m^d;\C^n)$, $q\in(1,\infty)$.
\end{theorem}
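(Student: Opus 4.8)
The plan is to turn the two hypotheses into $L^{p_0}$--$L^{p_1}$ off‑diagonal estimates for the complex ``heat'' family $\{e^{-te^{i\t}A^2}\}$ with $1<p_0<2<p_1<\infty$, feed these into Kunstmann's Theorem~\ref{Thm2.2} to get $R$‑boundedness of $(S)_\a$, and then invoke Kriegler--Weis (Theorem~\ref{Thm2.3}) exactly as its statement allows.

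First I would exploit the finite speed of propagation. By the spectral theorem, $e^{-zA^2}=(4\pi z)^{-1/2}\int_\m e^{-s^2/4z}e^{isA}\,ds$ whenever $\mathrm{Re}\,z>0$; for $z=te^{i\t}$ the kernel in $s$ has modulus $(4\pi t)^{-1/2}e^{-(\cos\t)s^2/4t}$, a Gaussian of width $\simeq\sqrt{t/\cos\t}$. Since $e^{isA}$ is unitary on $L^2$ and $1_Ee^{isA}1_F=0$ once $|s|<d(E,F)/\kappa$, integrating the Gaussian tail gives, for every $N$,
\[
\|1_E e^{-te^{i\t}A^2}1_F\|_{2\to2}\lesssim_N(\cos\t)^{-1/2}\Bigl(1+\tfrac{d(E,F)}{\sqrt t}\sqrt{\cos\t}\Bigr)^{-N}.
\]
As $\tfrac\pi2-|\t|\simeq\cos\t$ on $(-\tfrac\pi2,\tfrac\pi2)$ and $\a>\tfrac12$, multiplying by $(\tfrac\pi2-|\t|)^{\a}$ absorbs the prefactor $(\cos\t)^{-1/2}$; optimising the resulting quantity $(\cos\t)^{\a-1/2}(1+\tfrac{d(E,F)}{\sqrt t}\sqrt{\cos\t})^{-N}$ over $\t$ then yields the $\t$‑uniform, scale‑$\sqrt t$ bound $(1+d(E,F)/\sqrt t)^{-(2\a-1)}$. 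This is exactly where $\a>\tfrac12$ enters: the spare half‑power of $\cos\t$ is traded for honest off‑diagonal decay.

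Next I would chain the hypothesis on $(I)_\a$. Iterating the assumed $L^{q_*}$--$L^q$ boundedness over $q\in(1,2]$, together with Stein interpolation and the $L^2$‑contractivity of $e^{-te^{i\t}A^2}$, exactly as in Lemma~\ref{Lem3}, gives for each $p_0\in(1,2]$ that $e^{-te^{i\t}A^2}\colon L^{p_0}\to L^2$ is bounded with norm $\lesssim t^{-\frac d2(\frac1{p_0}-\frac12)}(\cos\t)^{-c(p_0)}$; dualising (using $(e^{-te^{i\t}A^2})^*=e^{-te^{-i\t}A^2}$ on $L^2$ and $(q_*)'=(q')^*$) gives the symmetric statement $e^{-te^{i\t}A^2}\colon L^2\to L^{p_1}$ for $p_1\in[2,\infty)$. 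Now split $e^{-te^{i\t}A^2}$ into three equal‑time factors, put the $L^2$--$L^2$ off‑diagonal factor of the previous step in the middle, and compose the off‑diagonal bounds (all operators being first defined on $L^2\supseteq L^{p_0}\cap L^{p_1}$): for every $1<p_0<2<p_1<\infty$, a suitable larger power $\a'$ (a fixed multiple of $\a$ depending on $p_0,p_1$), and $\delta$ as large as we please,
\[
\bigl\|1_{B(x,\sqrt t)}\,(\tfrac\pi2-|\t|)^{\a'}e^{-te^{i\t}A^2}\,1_{A(x,\sqrt t,k)}\bigr\|_{p_0\to p_1}\lesssim|B(x,\sqrt t)|^{-(\frac1{p_0}-\frac1{p_1})}(1+k)^{-\delta}.
\]

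With $\rho(t,\t)=\sqrt t$ this is the hypothesis of Theorem~\ref{Thm2.2} with $p=p_0$, $q=p_1$, so $(S)_{\a'}=\{(\tfrac\pi2-|\t|)^{\a'}e^{-te^{i\t}A^2}\}$ is $R_s$‑bounded on $L^r$ for all $r\in(p_0,p_1)$ and $s\in[p_0,p_1]$; choosing $s=2$ (permissible since $p_0<2<p_1$) gives $R$‑boundedness on $L^r$, and letting $p_0\downarrow1$, $p_1\uparrow\infty$ covers every $r\in(1,\infty)$. The $\t=0$ case of the same off‑diagonal bounds are generalized Gaussian estimates, which force $A^2$ to be $0$‑sectorial with a bounded $H^\infty$ calculus on each $L^r(\m^d)$, $r\in(1,\infty)$ (on $L^2$ this is just self‑adjointness). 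Theorem~\ref{Thm2.3}, applied to $A^2$ with $U=\m^d$, then turns $R$‑boundedness of $(S)_{\a'}$ into an $R$‑bounded $H^\gamma_2$ calculus for $A^2$ on $L^r$ whenever $\gamma>\a'+\tfrac12$. The delicate part is the bookkeeping in the third paragraph: one must check that a single weight $(\tfrac\pi2-|\t|)^{\a'}$ simultaneously absorbs the $(\cos\t)^{-1/2}$ from the $L^2$ off‑diagonal factor, absorbs the angular losses accumulated in running the $L^{q_*}$--$L^q$ hypothesis up and down the Sobolev scale, and still leaves decay in $k$ of order exceeding $\tfrac d{p_0}+\tfrac1{p_0'}$, as Theorem~\ref{Thm2.2} requires — it is the strict inequality $\a>\tfrac12$ that keeps these demands compatible.
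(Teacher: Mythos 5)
Your overall architecture matches the paper's: finite propagation speed plus the subordination formula $e^{-zA^2}=(4\pi z)^{-1/2}\int e^{-s^2/4z}e^{isA}\,ds$ gives weighted $L^2$--$L^2$ off-diagonal bounds for the analytic semigroup; the hypothesis on $(I)_\a$ supplies $L^{p}$--$L^2$ smoothing via the Sobolev ladder; Kunstmann's Theorem \ref{Thm2.2} converts the off-diagonal information into $R$-boundedness of $(S)_\a$; Kriegler--Weis (Theorem \ref{Thm2.3}) finishes. Your first paragraph is essentially the paper's Step 1, except that the paper keeps the Gaussian factor $e^{-\frac{d(E,F)^2}{4c^2}\mathrm{Re}(1/z)}$ instead of trading it for polynomial decay of order $2\a-1$; this matters later, since Kunstmann requires $\delta>\frac{d}{p}+\frac{1}{p'}$ while $2\a-1$ can be arbitrarily small, so the enlargement of the exponent has to be tracked more carefully than your closing sentence suggests.

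The genuine gap is in your third paragraph. From the hypothesis you derive only \emph{global} bounds $\|e^{-te^{i\t}A^2}\|_{L^{p_0}\to L^2}\lesssim t^{-\frac d2(\frac1{p_0}-\frac12)}(\cos\t)^{-c}$ for the outer factors, and then claim that sandwiching the $L^2$--$L^2$ off-diagonal factor between them yields an $L^{p_0}$--$L^{p_1}$ \emph{off-diagonal} estimate. It does not: in $1_E\,P\,Q\,R\,1_F$ the outer operators $P$ and $R$ do not preserve supports, so $R1_Fu$ spreads over all of $\m^d$, the cut-offs $1_E,1_F$ never reach the middle factor $Q$, and all one recovers is the product of the three global norms, with no decay in $d(E,F)$. (Inserting a partition into annuli does not help: the annulus adjacent to $E$ paired with the one adjacent to $F$ contributes a term in which the middle factor gives no decay and the outer factors give none either.) The paper avoids this by interpolating the \emph{localized} operators $1_Ee^{-zA^2}1_F$ themselves: Stein interpolation between the $L^2$--$L^2$ bound carrying the full Gaussian weight and the $L^{q_*}$--$L^q$ hypothesis bound carrying the factor $(\mathrm{Re}\,z)^{1/2}$ produces the family $(K)^\a_\beta$, an $L^q$--$L^2$ estimate that retains the fractional Gaussian weight $e^{\frac{d(E,F)^2}{4c^2}\mathrm{Re}(\frac1z)(1-\beta)}$ together with the time decay $(\mathrm{Re}\,z)^{\beta/2}$. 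Kunstmann's theorem is then applied with the exponent pair $(q,2)$ rather than a pair straddling $2$, the range of $q$ is pushed down the Sobolev scale by iterating with the semigroup law $e^{-2zA^2}=e^{-zA^2}e^{-zA^2}$ and re-interpolating, and $q>2$ is reached at the very end by duality and self-adjointness. To repair your argument, replace the three-factor composition by this interpolation of localized bounds (or otherwise first establish off-diagonal decay for the $L^{p_0}\to L^2$ factors before composing).
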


\begin{proof}
We first prove this result for $q\in(1,2]$ and then use duality to prove it for all $q\in (1,\infty)$. Now for the former case we shall start proving the boundedness of the set $(S)_\a$ for $q\in(2_*,2]$ which requires the following steps.\\

\noindent
\textbf{Step 1}: $L^2-L^2$ boundedness of the set
\begin{equation*}
    (J)_\a:=\bigg\{\bigg(\frac{\pi}{2}-|\arg z|\bigg)^\a e^{\frac{d(E,F)^2}{c^2}Re(\frac{1}{z})}1_E\exp(-zD^2)1_F:z\in\Sigma_\t,\t\in(\frac{-\pi}{2},\frac{\pi}{2})\bigg\}
\end{equation*}
where $E,F\subset\m^d$ are arbitrary Borel sets, $c>0$ is a constant independent of $E$ and $F$, and $\a>\frac{1}{2}$. 

We shall prove this boundedness by using Phillips calculus. Since $iD$ generates a bounded $C_0$-group $\exp(itD)$ on $L^2(\m^d)$, by Phillips calculus we have
\begin{equation*}
    \Phi_g(D)u:=\int_{-\infty}^\infty g(t)\exp(itD)u dt, \; g\in L^1(\m), u\in L^2(\m^d).
\end{equation*}
Let $\Phi_g(D)=\exp(-zD^2)$ then $g(t)=\frac{e^{-t^2/4z}}{\sqrt{2}\sqrt{z}}$, for $Re(z)>0$. Since the group $\exp(itD)$ is bounded and has finite speed of propagation, therefore for arbitrary Borel sets $E,F\subset\m^d$ there exists $c>0$ (independent of $E,F$ and $t$) such that
\begin{equation*}
    1_E\exp(itD)(1_Fu)=0,\;\text{ if } \frac{d(E,F)}{|t|}\geq c.
\end{equation*}
Consider\; $||1_E\exp(-zD^2)(1_Fu)||_2$
\begin{align*}
    &=\bigg|\bigg|\int_{-\infty}^\infty \frac{e^{-t^2/4z}}{\sqrt{2}\sqrt{z}} 1_E\exp(itD)(1_Fu)dt\bigg|\bigg|_2\\
    &\lesssim\int_{-\infty}^\infty \frac{e^{\frac{-t^2}{4} Re(\frac{1}{z})}}{|\sqrt{z}|}||1_E\exp(itD)(1_Fu)||_2dt\\
    &=\int_{-\infty}^{-\frac{d(E,F)}{c}}\frac{e^{\frac{-t^2}{4} Re(\frac{1}{z})}}{|\sqrt{z}|}||1_E\exp(itD)(1_F u)||_2dt+\int_{\frac{d(E,F)}{c}}^\infty \frac{e^{\frac{-t^2}{4} Re(\frac{1}{z})}}{|\sqrt{z}|}||1_E\exp(itD)(1_F u)||_2dt\\
    &\simeq\int_{\frac{d(E,F)}{c}}^\infty \frac{e^{\frac{-t^2}{4} Re(\frac{1}{z})}}{|\sqrt{z}|}||1_E\exp(itD)(1_F u)||_2dt\\
    &\lesssim||1_Fu||_2 \frac{1}{\sqrt{\cos\t}}e^{-\frac{d(E,F)^2}{4c^2}Re(\frac{1}{z})}
\end{align*}
where the last inequality is obtained by change of variables, assuming $z=re^{i\t}$.
Therefore $||e^{\frac{d(E,F)^2}{4c^2}Re(\frac{1}{z})}1_E\exp(-zD^2)(1_F u)||_2\lesssim\frac{1}{(\frac{\pi}{2}-|\arg z|)^{1/2}}||1_Fu||_2$. Also for $\a>0$ we get,
\begin{equation*}
    \bigg|\bigg|\bigg(\frac{\pi}{2}-|\arg z|\bigg)^{\a+1/2}e^{\frac{d(E,F)^2}{4c^2}Re(\frac{1}{z})}1_E\exp(-zD^2)(1_F u)\bigg|\bigg|_2\lesssim||1_Fu||_2.
\end{equation*}
In other words, 
\begin{equation*}
   \bigg|\bigg|\bigg(\frac{\pi}{2}-|\arg z|\bigg)^{\a}e^{\frac{d(E,F)^2}{4c^2}Re(\frac{1}{z})}1_E\exp(-zD^2)(1_F u)\bigg|\bigg|_2\lesssim||1_Fu||_2, \text{ for }\a>\frac{1}{2}.
\end{equation*}
\noindent
Thus the set $(J)_\a$ is bounded on $L^2$ for $\a>\frac{1}{2}$.\\

\noindent
\textbf{Step 2}: $L^{2_*}-L^2$ boundedness of $(I)_\a$ and $L^2-L^2$ boundedness of $(J)_\a$ implies $L^q-L^2$ boundedness of $(K)^\a_\beta$ for $q\in[2_*,2]$ where $(K)^\a_\beta$ is defined as the set
\begin{equation*}
   (K)^\a_\beta:=\bigg\{\bigg(\frac{\pi}{2}-|\arg z|\bigg)^\a (Re\;z)^{\frac{\beta}{2}} e^{\frac{d(E,F)^2}{4c^2}Re(\frac{1}{z})(1-\beta)}1_E\exp(-zD^2)1_F:Re(z)>0\bigg\}
\end{equation*}
where $Re(\beta)\in[0,1)$ satisfies $\frac{1}{q}=\frac{Re(\beta)}{2_*}+\frac{1-Re(\beta)}{2}$ and $\a>\frac{1}{2}$. To prove this we shall use Stein's Interpolation Theorem once again. Let $s\in\m$ and consider the following operators (of the form $T_{\beta,z}\in(K)^\a_\beta$)
\begin{align*}
    &T_{1+is,z}:=\bigg(\frac{\pi}{2}-|\arg z|\bigg)^\a (Re\;z)^{(1+is)/2}e^{\frac{d(E,F)^2}{4c^2}Re(\frac{1}{z})(is)}1_E\exp(-zD^2)1_F\\
    &T_{0+is,z}:=\bigg(\frac{\pi}{2}-|\arg z|\bigg)^\a (Re\;z)^{(is)/2} e^{\frac{d(E,F)^2}{4c^2}Re(\frac{1}{z})(1-is)}1_E\exp(-zD^2)1_F
\end{align*}
By the proof in Step 1 and the assumption of the Theorem we have $T_{\beta,z}\in B(L^2,L^2)$ for $Re(\beta)=0$ and $T_{\beta,z}\in B(L^{2_*},L^2)$ for $Re(\beta)=1$, respectively. Now by Stein's Interpolation Theorem for $Re\;\beta\in[0,1)$ the operator $T_{\beta,z}(\in (K)^\a_\beta)$ belongs to $B(L^q,L^2)$ for $q\in(2_*,2]$ and $Re\;\beta=\frac{d}{q}-\frac{d}{2}$.
That is, for $\a>\frac{1}{2}$ and $z=te^{i\t}$ where $\t\in(\frac{-\pi}{2},\frac{\pi}{2})$ and $t>0$ we have,
\begin{equation*}
    \bigg|\bigg|\bigg(\frac{\pi}{2}-|\t|\bigg)^\a (t\cos\t)^{\frac{\beta}{2}}e^{\frac{d(E,F)^2}{4c^2t}(1-\beta)\cos\t}1_E\exp(-te^{i\t}D^2)1_F u\bigg|\bigg|_2\lesssim||1_Fu||_q.
\end{equation*}

\noindent
\textbf{Step 3:}
Assume now that $E=B(x,\sqrt{t})$ and $F=B(x,(k+1)\sqrt{t})\setminus B(x,k\sqrt{t})$, where $x\in\m^d$, $k\geq0$ and $t>0$.  Then $d(E,F)=(k-1)\sqrt{t}$ for $k\geq1$.
Applying Theorem \ref{Thm2.2} by Kunstmann and using
\begin{align*}
    \bigg|\bigg|\bigg(\frac{\pi}{2}-|\t|\bigg)^\a 1_E\exp(-te^{i\t}D^2)1_F \bigg|\bigg|_{q\to2}&\lesssim (t\cos\t)^{-Re\;\beta/2}e^{-\frac{d(E,F)^2}{4c^2t}\cos(\t)Re(1-\beta)}\\
    &\lesssim\frac{|B(x,\sqrt{t})|^{-\big(\frac{1}{q}-\frac{1}{2}\big)}}{(\cos\t)^{\frac{Re\;\beta}{2}}}e^{-c'(k-1)^2Re(1-\beta)\cos\t}\\
    &\lesssim\frac{|B(x,\sqrt{t})|^{-\big(\frac{1}{q}-\frac{1}{2}\big)}}{(\cos\t)^{\frac{\delta}{2}+\frac{Re\;\beta}{2}}}\frac{1}{(1+c'(k-1)^2Re(1-\beta))^{\delta/2}},\\
    &\hspace{3cm}\text{ for } \delta>\frac{d}{q}+\frac{1}{q'}\\
    &\lesssim\frac{|B(x,\sqrt{t})|^{-\big(\frac{1}{q}-\frac{1}{2}\big)}}{(\cos\t)^{\frac{\delta}{2}+\frac{Re\;\beta}{2}}}\frac{c_\delta}{(1+k)^\delta},
\end{align*}
that is, for $\a>\frac{1}{2}$ and $\delta>\frac{d}{q}+\frac{1}{q'}$ we have,
\begin{equation*}
     \bigg|\bigg|\bigg(\frac{\pi}{2}-|\t|\bigg)^{\a+\frac{\delta}{2}+\frac{Re\;\beta}{2}} 1_E\exp(-te^{i\t}D^2)1_F \bigg|\bigg|_{q\to2}\lesssim|B(x,\sqrt{t})|^{-\big(\frac{1}{q}-\frac{1}{2}\big)}\frac{c_\delta}{(1+k)^\delta}.
\end{equation*}
Or we can write,
\begin{equation*}
     \bigg|\bigg|\bigg(\frac{\pi}{2}-|\t|\bigg)^{\a} 1_E\exp(-te^{i\t}D^2)1_F \bigg|\bigg|_{q\to2}\lesssim|B(x,\sqrt{t})|^{-\big(\frac{1}{q}-\frac{1}{2}\big)}\frac{c_\delta}{(1+k)^\delta}
\end{equation*}
for  $\a>\frac{1}{2}+\frac{\delta}{2}+\frac{d}{2}\big(\frac{1}{q}-\frac{1}{2}\big)$.
Thus, by Theorem \ref{Thm2.2} we obtain that the set $(S)_\a$ is R-bounded on $L^q$ for $q\in(2_*,2]$. Hence the operator $D^2$ has R-bounded $\mathcal{H}^{\gamma}_2$ calculus for $\gamma>\a+\frac{1}{2}$ on $L^q$ for $q\in(2_*,2]$ by Theorem \ref{Thm2.3}.

\textbf{Step 4:} We now prove the R-boundedness of $(S)_\a$ on $L^q$ for $q\in(p_*,p]$ where $p\in(2_*,2]$ and then take it down to the least Sobolev exponents below 2 which remains greater than 1 so that we can prove the R-boundedness for all $p\in(1,2]$.

For this we already have $L^{p_*}-L^p$ boundedness of $(I)_\a$ by hypothesis. Using this and the fact that the set
$(K)^\a_\beta$ is $L^p-L^2$ bounded for $p\in(2_*,2]$ we obtain that 
\begin{equation*}
    \bigg\{\bigg(\frac{\pi}{2}-|\arg z|\bigg)^{2\a}(Re\;z)^{\frac{1+\beta}{2}} e^{\frac{d(E,F)^2}{c^2}Re(\frac{1}{z})(1-\beta)}1_E\exp(-2zD^2)1_F:z\in\Sigma_\t, |\t|<\frac{\pi}{2}\bigg\}
\end{equation*}
is $L^{p_*}-L^2$ bounded for $\a>1$. Interpolating this with the $L^2-L^2$ bound of 
\begin{equation*}
  \bigg\{\bigg(\frac{\pi}{2}-|\arg z|\bigg)^{2\a}\exp\bigg(2\frac{d(E,F)^2}{c^2}Re(\frac{1}{z})\bigg)1_E\exp(-zD^2)1_F:z\in\Sigma_\t, |\t|<\frac{\pi}{2}\bigg\}
\end{equation*}
we get $L^q-L^2$ boundedness of
\begin{equation*}
 \bigg\{\bigg(\frac{\pi}{2}-|\arg z|\bigg)^{2\a}\frac{ e^{\frac{d(E,F)^2}{c^2}Re(\frac{1}{z})(2-\eta(1+\beta))}}{(Re\;z)^{-\frac{(1+\beta)\eta}{2}}}1_E\exp(-2zD^2)1_F:z\in\Sigma_\t, |\t|<\frac{\pi}{2}\bigg\}
 \end{equation*}
 for $q\in(p_*,2]$ and $\a>1$ and where $\eta$ satisfies $\frac{1}{q}=\frac{\eta}{p_*}+\frac{1-\eta}{2}$. Now proceeding similarly as in \textbf{Step 3} and using Theorem \ref{Thm2.2} we get that the set $\Big\{\big(\frac{\pi}{2}-|\arg z|\big)^{2\a}\exp(-2zD^2)\Big\}$ has $L^q-L^q$ R-bound, that is, the set $(S)_{2\a}$ has $L^q-L^q$ R-bound for $q\in(2_{**},2_*]$ and $2\a\geq1+\frac{\delta}{2}+\frac{(1+Re\;\beta)Re\;\eta}{2}$, where $Re\;\beta,Re\;\eta\in(0,1)$ or $2\a\geq1+\frac{\delta}{2}+d\big(\frac{1}{q}-\frac{1}{2}\big)$. This implies that the operator $D^2$ has H{\"o}rmander calculus $\mathcal{H}^\gamma_2$ for $\gamma>2\a+\frac{1}{2}$ on $L^q$ for $q\in(2_{**},2]$.
 
 By induction we can go down to the Sobolev exponent $2_{(*)m}$ as long as it is greater than 1 for some $m$. If $2_{(*)m}\leq1$ then for any $r=1+\e,\;(\e>0)$ there exists a $p>2_{(*){m-1}}$ such that $r=p_*$. Then by same steps as done previously we will have $L^q-L^q$ R-bound for $(S)_{\a'}$ for $q\in(p_*,p]$ and some $\a'>0$. Therefore we get that $(S)_\a$ is R-bounded on $L^q$ for $q\in(1,2]$ and some $\a>0$ such that $\a\geq c_1+\frac{\delta}{2}+c_2d(\frac{1}{q}-\frac{1}{2})$, where $c_1$, $c_2$ and $\delta$ depends on the choice of $q$. By Kriegler-Weis result (Theorem \ref{Thm2.3}) we thus obtain that $D^2$ has $\mathcal{H}^\gamma_2$ calculus on $L^q$ for $q\in(1,2]$ and $\gamma>\a+\frac{1}{2}$ for some $\a>0$.
 
 For $q\in(2,\infty)$ we use duality of R-bounded sets (Proposition 8.4.1, \cite{ABS}) and the fact that the operator $D^2$ is self-adjoint. Since $\exp(-zD^2)\in B(L^q)$,
 $\langle\exp(-zD^2)u,v\rangle=\langle u, \exp(-\bar{z}D^2)v\rangle$ for all
 $u\in L^2\cap L^q,\;v\in L^2\cap L^{q'}$. Thus, $\exp(-zD^2)^*v=\exp(-\bar{z}D^2)v,\;\forall\;v\in L^2\cap L^{q'}$. By density, $\exp(-zD^2)^*=\exp(-\bar{z}D^2)\text{ on }B(L^{q'})$. Thus, by duality we get $(S)_\a$ is R-bounded on $L^{q'}$ where $\frac{1}{q}+\frac{1}{q'}=1$. Hence, the operator $D^2$ has H{\"o}rmander calculus $\mathcal{H}^\gamma_2$ on $L^q$ for $q\in(1,\infty).$
\end{proof}

One of the useful applications of this theorem is that the square of a perturbed Hodge-Dirac operator $\Pi_B$ also has a R-bounded H{\"o}rmander calculus, as can be seen in the following corollary.

\begin{corollary}\label{Cor2}
Let $D=\Pi_B$ in the above theorem be self-adjoint. Then $\Pi_B^2$ has a R-bounded H{\"o}rmander calculus on $L^p(\m^d;\C^n)$ for $p\in(p_H,p^H)$.
\end{corollary}
\begin{proof}
Using Lemma 7.1, \cite{FMP} we can prove the $L^{q_*}-L^q$ boundedness of the set $(I)_\a$ for $q\in(1,2]$ required in the above theorem for the operator $\Pi_B^2$. The result then follows from the proof of the above theorem.
\end{proof}

\begin{remark}
We can, in fact, obtain the R-bounded H{\"o}rmander calculus on $L^p$ for $p\in(1,\infty)$ for those perturbed Hodge-Dirac operators for which $p_H=1$ and $p^H=\infty$ (for example, with real or smooth coefficients).
\end{remark}
\endgroup

\bibliographystyle{abbrv}
\bibliography{bibliography}
\end{document}